\theoremstyle{plain}
\newtheorem{thm}{Theorem}[section]
\theoremstyle{definition}
\newtheorem{df}{Definition}[section]
\newtheorem{rem}{Remark}[section]
\newtheorem{ex}{Example}[section]
\newcommand{\FF}{\mathbb{F}}
\newcommand{\ZZ}{\mathbb{Z}}
\newcommand{\0}{\mathbf{0}}
\newcommand{\NN}{\mathbb{N}}
\newcommand{\CZ}{\mathcal{Z}}
\newcommand{\G}{\mathcal{G}}
\newcommand{\J}{\mathcal{J}}
\def\bm#1{\mathbf{#1}}
\DeclareMathOperator{\wt}{wt}
\begin{document}

\title{{On the cycle index and the weight enumerator II}
}
\author[Chakraborty]{Himadri Shekhar Chakraborty*}
\thanks{*Corresponding author}
\address
	{
		(1) Graduate School of Natural Science and Technology,
		Kanazawa University,  
		Ishikawa 920-1192, Japan.
		The current affiliation is Department of Mathematics, Shahjalal University of Science and Technology, Sylhet-3114, Bangladesh 
	}
\email{himadri-mat@sust.edu}
\author[Miezaki]{Tsuyoshi Miezaki}
\address{Faculty of Science and Engineering, 
		Waseda University, 
		Tokyo 169--8555, Japan
}
\email{miezaki@waseda.jp} 
\author[Oura]{Manabu Oura}
\address
	{
		Faculty of Mathematics and Physics, 
		Kanazawa University,  
		Ishikawa 920--1192, Japan 
	}
\email{oura@se.kanazawa-u.ac.jp}
\date{\today}

\maketitle

\begin{abstract}
In {a previous paper}, 
the second and third named author introduced the 
concept of the complete cycle index and 
discussed a relation with the 
complete weight enumerator 
in coding theory. 
In the present paper, 
we introduce the 
concept of 
the complete joint cycle index and 
the average complete joint cycle index, and 
discuss a relation with the 
complete joint weight enumerator and 
the average complete joint weight enumerator, respectively 
in coding theory. 
Moreover, 
the notion of the
average intersection numbers is given, 
and we discuss a relation with the 
average intersection numbers in coding theory.
\end{abstract}


\noindent
{\small\bfseries Key Words and Phrases.}
{Codes}, cycle index, {weight enumerator}.\\ \vspace{-0.15in}

\noindent
2010 {\it Mathematics Subject Classification}.
Primary 11T71;
Secondary 20B05, 11H71.\\ \quad

\section{Introduction}\label{Sec:Introduction}

This paper is a sequel to the 
previous paper \cite{MO}. 
In \cite{MO}, 
the first and second named authors defined 
the complete cycle index and 
gave a relationship 
between 
the complete cycle index and 
the complete weight enumerator. 
This was motivated by \cite{CameronPaper,Cameron}, 
which gave 
a relationship between 
the cycle index and the weight enumerator. 

To state our results, 
we review \cite{MO}.
Let $G$ be a permutation group on a set $\Omega$, 
where $|\Omega| = n$. 
For each element
$h \in G$, we can decompose the permutation $h$ into a product 
of disjoint cycles; let
$c(h,i)$ be the number of $i$-cycles occurring by the action of $h$. 
Now the complete cycle
index of $G$ is the polynomial $Z(G;s(h,i): h\in G,i\in \NN)$ in indeterminates 
$\{s(h,i)\mid h\in G,i\in \NN\}$ given by
\[
	Z(G;s(h,i): h\in G,i\in \NN) =
	\sum_{h\in G}
	\prod_{i\in \NN}s(h,i)^{c(h,i)},
\]
where $ \NN := \{x \in \ZZ \mid x > 0\}$.

Let $\FF_q$ be the finite field of order $q$, where $q$ is a prime power. 
An~$\FF_q$-linear code $C$ is a vector subspace of $F_q^n$.
Then the complete weight enumerator of genus $g$ for a $\FF_q$-linear code~$C$ is defined as:
\[
	w_C^{(g)}(x_{{\bf a}}:{\bf a}\in \FF_q^g)=\sum_{{\bf v_1},\ldots,{\bf v_g}\in C}
	\prod_{{\bf a}\in \FF_q^g}x_{{\bf a}}^{n_{{\bf a}}({\bf v_1},\ldots,{\bf v_g})}, 
\]
where $n_{{\bf a}}({\bf v_1},\ldots,{\bf v_g})$ denotes the number 
of $i$ such that ${\bf a}=(v_{1i},\ldots,v_{gi})$.

\begin{df}\label{Def:MiezakiOura}
We construct from $C^g:=\underbrace{C\times \cdots\times C}_{g}$ a permutation
group $G(C^{g})$.
The group we construct is the additive group of $C^{g}$. 
We denote an element of $C^g$ by 
\[
	\tilde{\bm{c}} := 
	({\bm c_1},\ldots,{\bm c_n}):=
	\begin{pmatrix}
		a_{11}&\ldots&a_{1n}\\
		a_{21}&\ldots&a_{2n}\\
		\vdots&\cdots&\vdots\\
		a_{g1}&\ldots&a_{gn}
	\end{pmatrix},
\]
where 
{${\bm c_i}:={}^t(a_{1i},\ldots,a_{gi})\in \FF_q^{g}$}. 
We denote by $\mu_{j}(\tilde{\bm c})$ 
the $j$-th row of $\tilde{{\bm {c}}}$, and 
$\mu_{j}(\tilde{\bm c}) := (a_{j1},\ldots,a_{jn}) \in C$.
We let it act on the set
$\{1,\ldots,n\}\times \FF_q^{g}$ in the following way: 
$({\bf c_1},\ldots,{\bf c_n})$
acts as the permutation
\[
	\left(i,
	\begin{pmatrix}
		x_{1}\\
		x_{2}\\
		\vdots\\
		x_{g}
	\end{pmatrix}
	\right)\mapsto 
	\left(i,
	\begin{pmatrix}
		x_{1}+a_{1i}\\
		x_{2}+a_{2i}\\
		\vdots\\
		x_{g}+a_{gi}
	\end{pmatrix}
	\right) 
\]
of the set $\{1,\ldots,n\}\times \FF_q^g$.  
We call the complete cycle index 
\[
	Z(G(C^{g}),s(h,i):h\in C^g,i\in \NN) 
\]
the complete cycle index of genus $g$ for a code $C$. 
\end{df}

\begin{thm}[{\cite[Theorem 2.1]{MO}}]\label{thm:MiezakiOura}
	Let $C$ be a linear code over $\FF_q$ of length $n$, 
	where $q$ is a power of the prime number $p$.
	Let $w_C^{(g)}(x_{\bf a}:{\bf a}\in \FF_q^g)$ be the 
	complete weight enumerator of genus $g$ and 
	$Z(G(C^g);s(h,i):h\in C^g,i\in\NN)$ be the 
	complete cycle index of genus $g$. 
	
	Let $T$ be a map defined as follows: 
	for each $h=({\bm c_1},\ldots,{\bm c_n})\in C^g$ and $i\in \{1,\ldots,n\}$, 
	if 
	${\bm c_i}={\bm 0}$, then 
	\[
		s(h,1)\mapsto x_{{\bm c_i}}^{1/q^g};
	\]
	if 
	${\bm c_i}\neq {\bm 0}$, then 
	\[
		s(h,p)\mapsto x_{{\bm c_i}}^{p/q^g}. 
	\]
	Then we have 
	\[
		w_C^{(g)}(x_{\bm{a}}:\bm{a}\in \FF_q^g)
		=
		T(Z(G(C^g);s(h,i):h\in C^g,i\in \NN)). 
	\]
\end{thm}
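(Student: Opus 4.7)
The plan is to fix an element $h = (\bm{c}_1, \ldots, \bm{c}_n) \in C^g$, decompose the permutation it induces on $\{1, \ldots, n\} \times \FF_q^g$ into disjoint cycles, read off the monomial $\prod_i s(h,i)^{c(h,i)}$, apply the substitution $T$, and finally sum over $h$ to recover $w_C^{(g)}$.

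First, I would observe that each subset $\{i\} \times \FF_q^g$ is invariant under $h$, and that the restricted action is the translation $\bm{x} \mapsto \bm{x} + \bm{c}_i$ on $\FF_q^g$. Since $\FF_q$ has characteristic $p$, every nonzero vector in $\FF_q^g$ has additive order exactly $p$. Hence position $i$ contributes $q^g$ fixed points if $\bm{c}_i = \bm{0}$, and $q^g/p$ cycles of length $p$ otherwise; note that $q^g/p$ is an integer because $q$ is a power of $p$. Globally, $c(h,1) = q^g \cdot |\{i : \bm{c}_i = \bm{0}\}|$, $c(h,p) = (q^g/p) \cdot |\{i : \bm{c}_i \neq \bm{0}\}|$, and $c(h,j) = 0$ for all other $j \in \NN$.

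The key step is to factor $\prod_{i \in \NN} s(h,i)^{c(h,i)}$ as a product over positions $i \in \{1, \ldots, n\}$ of the code, so that the definition of $T$, which depends on the vector $\bm{c}_i$, can be unambiguously applied cycle-by-cycle. Concretely, I would rewrite
\[
\prod_{i \in \NN} s(h,i)^{c(h,i)} \;=\; \prod_{i=1}^{n} \psi_h(i),
\]
where $\psi_h(i) = s(h,1)^{q^g}$ when $\bm{c}_i = \bm{0}$ and $\psi_h(i) = s(h,p)^{q^g/p}$ when $\bm{c}_i \neq \bm{0}$. Applying $T$ to each factor yields $x_{\bm{c}_i}^{q^g/q^g} = x_{\bm{c}_i}$ in the former case and $x_{\bm{c}_i}^{(q^g/p)(p/q^g)} = x_{\bm{c}_i}$ in the latter. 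Hence $T$ sends the contribution of $h$ to $\prod_{i=1}^{n} x_{\bm{c}_i}$.

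Finally, summing over $h \in C^g$ and reindexing by the rows $\bm{v}_j := \mu_j(h) \in C$ for $j = 1, \ldots, g$, I would identify $\bm{c}_i$ with ${}^t(v_{1i}, \ldots, v_{gi}) \in \FF_q^g$ and gather like factors to obtain $\prod_{\bm{a} \in \FF_q^g} x_{\bm{a}}^{n_{\bm{a}}(\bm{v}_1, \ldots, \bm{v}_g)}$, matching the definition of $w_C^{(g)}$. The main conceptual obstacle is precisely the factorization step above: the statement of $T$ looks variable-level, but its intended content is position-level, so one must carefully present $\prod_i s(h,i)^{c(h,i)}$ in a form where each factor is indexed by the code position $i$ that determines which substitution of $T$ to apply. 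Once that bookkeeping is fixed, the remaining computation is essentially a counting of fixed points and $p$-cycles in an elementary abelian action, and the exponent balance $q^g/q^g = 1 = (q^g/p)(p/q^g)$ is precisely what forces the substitution to reconstruct the complete weight enumerator.
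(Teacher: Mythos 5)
Your proposal is correct and follows essentially the same route as the paper's proofs of the analogous statements (Theorems~\ref{Th:CompleteJointCycle} and~\ref{Thm:Main}): fix $h$, count $q^g$ fixed points per zero coordinate and $q^g/p$ cycles of length $p$ per nonzero coordinate, check that the exponents $q^g\cdot(1/q^g)$ and $(q^g/p)\cdot(p/q^g)$ both equal $1$, and sum over $h$. Your explicit position-by-position factorization $\prod_{i\in\NN}s(h,i)^{c(h,i)}=\prod_{i=1}^n\psi_h(i)$ is a welcome clarification of a bookkeeping point the paper leaves implicit, but it is the same argument.
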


The notion of the joint weight enumerator of two $\FF_q$-linear codes was introduced in~\cite{MMC1972}. Further, the notion of the $g$-fold complete joint weight enumerator of~$g$ linear codes over $\FF_q$ was given in~\cite{SC2000}.

\begin{df}[\cite{SC2000}]\label{Def:CompJWE}
	Let $C$ and $D$ be two linear codes of length $n$ over $\FF_q$. The \emph{complete joint weight enumerator} of codes $C$ and $D$ is defined as follows:
	\[
		\J_{C,D}(x_{\bm a} : \bm a \in \FF_q^2) := 
		\sum_{\bm{u}\in C,\bm{v} \in D}
		\prod_{\bm{a} \in \FF_q^{2}} x_{\bm a}^{n_{\bm a}(\bm{u}, \bm{v})},
	\]
	where $n_{{\bm a}}(\bm{u},\bm{v})$ denotes the number of $i$ such that ${\bf a}=(u_{i},v_{i})$.
\end{df}
The definition above gives rise to a natural question: is there
a complete joint cycle index that relates the 
complete joint weight enumerator $\J_{C,D}(x_{\bm a} : \bm a \in \FF_q^2)$?
The aim of the present paper is to provide a candidate that answers this question. 
We now present the concept of the complete joint cycle index. 

\begin{df}
Let $G$ and $H$ be two permutation groups on a set $\Omega$, where $|\Omega|=n$. Again let $\G_{G,H} := G \times H$ be the direct product of $G$ and $H$. For each element $(g,h) \in \G_{G,H}$, where $g \in G$ and $h \in H$, we can decompose each permutation of the pair $(g,h)$ into a product of disjoint cycles. Let $c(gh,i)$ be the number of $i$-cycles occurring by the action of $gh$, where $gh$ denotes the \emph{product of permutations} $g$ and $h$ which acts on $\Omega$ as $(gh)(\alpha) = h(g(\alpha))$ for any $\alpha \in \Omega$. Now the \emph{complete joint cycle index} of $G$ and $H$ is the polynomial 
\[
	\CZ_{G,H}(s((g,h),i)):=\CZ(\G_{G,H};s((g,h),i): (g,h) \in \G_{G,H}, i\in\NN)
\]
in indeterminates $s((g,h),i)$, where $(g,h) \in \G_{G,H}$ and $i \in \NN$, given by
\[
	\CZ_{G,H}(s((g,h),i)) := \sum_{(g,h) \in \G_{G,H}} \prod_{i \in \NN} s((g,h),i)^{c(gh,i)}.
\]
\end{df}

The concept of the complete joint cycle index is used in Theorem~\ref{Th:CompleteJointCycle}. Theorem~\ref{Th:CompleteJointCycle} gives a relation between complete joint cycle index and complete joint weight enumerator. This generalizes the earlier work Theorem~\ref{thm:MiezakiOura}. Further, we give the notion of the $r$-fold complete joint cycle index and the $(\ell,r)$-fold complete joint weight enumerator. In this paper we also give a link between the $r$-fold complete joint cycle index and the $(\ell,r)$-fold complete joint weight enumerator. The link is the main result of our paper giving a generalization of Theorem~\ref{Th:CompleteJointCycle}. This result presents us a new application of constructing the average $r$-fold complete joint cycle index and a motivation to establish a relation with the average $(\ell,r)$-fold complete joint weight enumerator analogue to the main result.
The diagrams in the following remarks may describe a concrete idea of our paper.
\begin{rem}
	\[
		\begin{tikzcd}
		\parbox{.5\textwidth}{$r$-fold complete joint cycle index for $\ell$-fold joint codes\\ (Definition~\ref{Def:rfoldCJCI})}
		\arrow{r}{r=2,\ell=1}\arrow{d}{\text{Theorem~\ref{Thm:Main}}}  
		& \parbox{.35\textwidth}{Complete joint cycle index for codes\\ (Definition~\ref{Def:CompleteJointCycleIndex})}
		\arrow{d}{\text{Theorem~\ref{Th:CompleteJointCycle}}} \\
		\parbox{.5\textwidth}{$(\ell,r)$-fold complete joint weight enumerator (Definition~\ref{Def:rsfoldCJWE})}
		\arrow{r}{r=2,\ell=1} 
		& \parbox{.38\textwidth}{Complete joint weight enumerator (Definition~\ref{Def:CompJWE})}
		\end{tikzcd}
		\]
\end{rem} 

\begin{rem}
	\[
	\begin{tikzcd}
		\parbox{.5\textwidth}{Average $r$-fold complete joint cycle index for $\ell$-fold joint codes \\(Definition~\ref{Def:rsAverageCJWE})}
		\arrow{r}{\ell=1} \arrow{d}{\text{Theorem~\ref{Thm:rsAverage}}}  
		&\parbox{.35\textwidth}{Average $r$-fold complete joint cycle index for codes (Definition~\ref{Def:AverageCJWE})} 
		\arrow{d}{\text{Theorem~\ref{Th:AverageJoint}}} \\
		\parbox{.5\textwidth}{Average $(\ell,r)$-fold complete joint weight enumerator (Definition~\ref{Def:rsAverageCJWE})}
		\arrow{r}{\ell=1} 
		&\parbox{.38\textwidth}{Average $r$-fold complete joint weight enumerator (Definition~\ref{Def:AverageCJWE})}
	\end{tikzcd}
	\]
\end{rem}

This paper is organized as follows. 
In Section~\ref{Sec:TheRelation}, we give a relation between the complete joint cycle index and the complete joint weight enumerator (Theorem~\ref{Th:CompleteJointCycle}). 
In Section~\ref{Sec:rsCompleteJointWeightEnum}, we introduce the notion of the $(\ell,r)$-fold complete joint weight enumerators of $\ell$-fold joint codes. We also give the MacWilliams type identity (Theorem~\ref{Th:GenMacWilliams}) for the $(\ell,r)$-fold complete joint weight enumerators. 
In Section~\ref{Sec:rCompleteJointCycleIndex}, we generalize the notion of the complete joint cycle index to the $r$-fold complete joint cycle index and also give the main result (Theorem~\ref{Thm:Main}) of this paper.
In Section~\ref{Sec:AverageCompleteJointCycleIndex}, we give the concept of the average complete joint cycle index and obtain a relation with the average complete joint weight enumerator of codes (Theorem~\ref{Th:AverageJoint}) and present a generalization (Theorem~\ref{Thm:rsAverage}) of the relation. 
In Section~\ref{Sec:AverageCompleteJointCycleIndex}, we also give the notion of the average intersection number of two permutation groups (Definition~\ref{Def:GrAvIN}), and obtain a relation with the average intersection numbers in coding theory (Theorem~\ref{Th:IntersectionNumber}).
In Section~\ref{Sec:ZkAnalogue}, 
we give a $\ZZ_k$-linear code analogue of the main result (Theorem~\ref{Th:ZkAnalogue}).

\section{The Relation}\label{Sec:TheRelation}

In this section, from any two $\FF_q$-linear codes, we construct two permutation groups, whose complete joint cycle index is essentially the complete joint weight enumerator of codes.
 
\begin{df}\label{Def:CompleteJointCycleIndex}
	Let $C$ and $D$ be two linear codes of length $n$ over $\FF_q$. 
	We construct from $C$ and $D$ two permutation groups $G(C)$ and $H(D)$ respectively. The groups $G(C)$ and $H(D)$ are the additive group of~$C$ and~$D$ respectively. We let each group act on the set $\{1, \dots, n\} \times \FF_q$ in the following way: the codeword $(u_1, \ldots, u_n)$ acts as the permutation 
	\[
		(i,x) \mapsto (i, x+u_i)
	\]
	of the set $\{1, \ldots, n\} \times \FF_q$. We define the \emph{product} of two permutations $(u_1, \ldots, u_n) \in C$ and $(v_1, \ldots, v_n) \in D$ as follows:
	\[
		(i,x) \mapsto (i,x+u_i+v_i)
	\]
	of a set $\{1, \ldots, n\} \times \FF_q$. Let $\G_{C,D} := G(C) \times H(D)$. We call the complete joint cycle index
	\[
		\CZ_{C,D}(s((g,h),i)) := \CZ(\G_{C,D};s((g,h),i): (g,h)\in \G_{C,D}, i\in\NN)
	\]
	the complete joint cycle index for codes $C$ and $D$.
\end{df}

\begin{ex}\label{Ex:Example1}
	Let
	\[
	C:=\left\{
	\begin{pmatrix}
	0\\
	0
	\end{pmatrix}, 
	\begin{pmatrix}
	0\\
	1
	\end{pmatrix}, 
	\begin{pmatrix}
	1\\
	0
	\end{pmatrix}, 
	\begin{pmatrix}
	1\\
	1
	\end{pmatrix} 
	\right\},
	D:=\left\{
	\begin{pmatrix}
	0\\
	0
	\end{pmatrix},
	\begin{pmatrix}
	1\\
	1
	\end{pmatrix} 
	\right\}
	\]
	Then the complete joint weight enumerator is 
	\[
	x_{00}^2+
	x_{01}^2+
	x_{00}x_{10}+
	x_{01}x_{11}+
	x_{10}x_{00}+
	x_{11}x_{01}+
	x_{10}^2+
	x_{11}^2.
	\]
	Let $G(C)$ and $H(D)$ are the permutation groups on $\{1,2\}\times {\FF_2}$. In the following calculation, for $g \in G(C)$ and $h \in H(D)$, we prefer to write the indeterminates $s((g,h),i)$ as 
	\[
		s\left(\dbinom{g}{h},i\right).
	\]
	Then the joint cycle index is 
	\begin{align*}
	&s
	\left(
	\begin{pmatrix}
	0&0\\
	0&0
	\end{pmatrix}, 1
	\right)^2
	s
	\left(
	\begin{pmatrix}
	0&0\\
	0&0
	\end{pmatrix}, 1
	\right)^2 +
	s
	\left(
	\begin{pmatrix}
	0&0\\
	1&1
	\end{pmatrix}, 2
	\right)^1
	s
	\left(
	\begin{pmatrix}
	0&0\\
	1&1
	\end{pmatrix}, 2
	\right)^1\\
	+&
	s
	\left(
	\begin{pmatrix}
	0&1\\
	0&0
	\end{pmatrix}, 1
	\right)^2
	s
	\left(
	\begin{pmatrix}
	0&1\\
	0&0
	\end{pmatrix}, 2
	\right)^1 +
	s
	\left(
	\begin{pmatrix}
	0&1\\
	1&1
	\end{pmatrix}, 2
	\right)^1
	s
	\left(
	\begin{pmatrix}
	0&1\\
	1&1
	\end{pmatrix}, 1
	\right)^2\\
	+&
	s
	\left(
	\begin{pmatrix}
	1&0\\
	0&0
	\end{pmatrix}, 2
	\right)^1
	s
	\left(
	\begin{pmatrix}
	1&0\\
	0&0
	\end{pmatrix}, 1
	\right)^2	+
	s
	\left(
	\begin{pmatrix}
	1&0\\
	1&1
	\end{pmatrix}, 1
	\right)^2
	s
	\left(
	\begin{pmatrix}
	1&0\\
	1&1
	\end{pmatrix}, 2
	\right)^1\\
	+&
	s
	\left(
	\begin{pmatrix}
	1&1\\
	0&0
	\end{pmatrix}, 2
	\right)^1
	s
	\left(
	\begin{pmatrix}
	1&1\\
	0&0
	\end{pmatrix}, 1
	\right)^2	+
	s
	\left(
	\begin{pmatrix}
	1&1\\
	1&1
	\end{pmatrix}, 1
	\right)^2
	s
	\left(
	\begin{pmatrix}
	1&1\\
	1&1
	\end{pmatrix}, 1
	\right)^2.
	\end{align*}
\end{ex}

Now we have the following result.

\begin{thm}\label{Th:CompleteJointCycle}
	Let $C$ and $D$ be two codes over $\FF_q$ of length $n$, where $q$ is a power of the prime number $p$.
	Let $\J_{C,D}(x_{\bm a}:{\bm a}\in \FF_q^2)$ be the complete joint weight enumerator and 
	$\CZ(\G_{C,D};s((g,h),i):(g,h)\in \G_{C,D},i\in\NN)$ be the complete joint cycle index. 
	
	Let $T$ be a map defined as follows: 
	for each $g = (u_1,\ldots,u_n)\in C$ and $h = (v_1,\ldots,v_n)\in D$, and for $i\in \{1,\ldots,n\}$,\\ 
	if 	
	$u_i+v_i = 0$, then 
	\[
		s((g,h),1)\mapsto x_{{u_i v_i}}^{1/q};
	\]
	if 
	$ u_i+v_i\neq 0$, then 
	\[
		s((g,h),p)\mapsto x_{{u_i v_i}}^{p/q}. 
	\]
	Then we have 
	\[
		\J_{C,D}(x_{\bm a}: {\bm a}\in \FF_q^2)
		=
		T(\CZ(\G_{C,D};s((g,h),i):(g,h)\in \G_{C,D},i\in\NN)). 
	\]
\end{thm}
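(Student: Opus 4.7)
The plan is to fix $(g,h) \in \G_{C,D}$ with $g = (u_1,\ldots,u_n)$ and $h=(v_1,\ldots,v_n)$, and to compute the cycle structure of $gh$ on $\{1,\ldots,n\}\times \FF_q$ directly. By the product rule from Definition~\ref{Def:CompleteJointCycleIndex}, $gh$ sends $(i,x)\mapsto(i,x+u_i+v_i)$, so it stabilises each fibre $\{i\}\times\FF_q$ and acts on that fibre as translation by $u_i+v_i$. Since $\FF_q$ has characteristic $p$, this translation has order $1$ when $u_i+v_i=0$ and order $p$ otherwise. Consequently the fibre over $i$ contributes exactly $q$ fixed points when $u_i+v_i=0$, and exactly $q/p$ cycles of length $p$ when $u_i+v_i\neq 0$; no other cycle lengths occur.

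Summing these local contributions over $i$ yields $c(gh,1)=q\cdot|\{i:u_i+v_i=0\}|$, $c(gh,p)=(q/p)\cdot|\{i:u_i+v_i\neq 0\}|$, and $c(gh,j)=0$ for all other $j$. I would then rewrite the monomial attached to $(g,h)$ in $\CZ_{C,D}$ as a product indexed by the positions $i$:
\[
\prod_{i\in\NN} s((g,h),i)^{c(gh,i)}
= \prod_{i:\,u_i+v_i=0} s((g,h),1)^q \; \prod_{i:\,u_i+v_i\neq 0} s((g,h),p)^{q/p}.
\]
The map $T$ is designed precisely so that each factor collapses to $x_{u_iv_i}$: when $u_i+v_i=0$ we have $s((g,h),1)^q \mapsto x_{u_iv_i}$, and when $u_i+v_i\neq 0$ we have $s((g,h),p)^{q/p}\mapsto x_{u_iv_i}$. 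Hence the image of the $(g,h)$-monomial under $T$ is $\prod_{i=1}^n x_{(u_i,v_i)}$.

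Summing over $(g,h)\in C\times D$ gives $\sum_{\bm u\in C,\bm v\in D}\prod_{i=1}^n x_{(u_i,v_i)}$, which after collecting equal pairs $(u_i,v_i)$ equals $\sum_{\bm u\in C,\bm v\in D}\prod_{\bm a\in\FF_q^2} x_{\bm a}^{n_{\bm a}(\bm u,\bm v)}$, i.e. $\J_{C,D}(x_{\bm a}:\bm a\in\FF_q^2)$ by Definition~\ref{Def:CompJWE}. This completes the identification.

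The only subtle point, and the step most likely to require care, is that $T$ is not a uniform substitution of the single indeterminate $s((g,h),1)$ or $s((g,h),p)$ but a position-indexed re-expression of each monomial; the factorisation displayed above is what makes the rule well-defined. The argument is essentially a two-code reprise of the proof of Theorem~\ref{thm:MiezakiOura}, with the product permutation $gh$ playing the role that a single codeword played there, and the characteristic $p$ of $\FF_q$ again providing the only nontrivial cycle length. Once this bookkeeping is set up, no further obstacle arises.
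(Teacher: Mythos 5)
Your proposal is correct and follows essentially the same argument as the paper's own proof: fix $(g,h)$, observe that each fibre $\{i\}\times\FF_q$ contributes $q$ fixed points when $u_i+v_i=0$ and $q/p$ cycles of length $p$ otherwise, and match the resulting monomial under $T$ with $\prod_{i=1}^n x_{u_iv_i}$. Your explicit remark that $T$ acts as a position-indexed re-expression of each monomial rather than a uniform substitution is a useful clarification, but the underlying computation is the same.
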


\begin{proof}
	Let $g = (u_1,\ldots,u_n)\in C$ and $h = (v_1,\ldots,v_n)\in D$. Again let
	\[
		\wt(g,h)=\sharp\{i\mid {u_i + v_i}\neq 0\}. 
	\]
	If ${u_i+v_i}= 0$, then the $q$ points of the form 
	$(i,{x})\in \{1,\ldots,n\}\times \FF_q$ 
	are all fixed by these elements; 
	if ${u_i+v_i}\neq 0$, they are permuted in $q/p$ cycles of length $p$, 
	each of the form
	\[
		(i, x) \mapsto (i,x+u_i+v_i ) \mapsto (i,x+2u_i+2v_i ) \mapsto \cdots \mapsto (i,x+ pu_i+pv_i ) = (i,x),
	\]
	the last equation holding because $\FF_q$ is of characteristic $p$. 
	Thus, $g = (u_1,\ldots,u_n)\in C$ and $h = (v_1,\ldots,v_n)\in D$ contribute 
	\[
		s((g,h),1)^{q(n-\wt(g,h))}s((g,h),p)^{(q/p)\wt(g,h)}
	\]
	to the sum in the formula for the complete joint cycle index, 
	and 
	\[
		\prod_{i=1}^{n} x_{{u_i}{v_i}}
	\]
	to the sum in the formula for the complete joint weight enumerator. 
	The result follows.
\end{proof}

\section{$(\ell,r)$-fold Complete Joint Weight Enumerators}\label{Sec:rsCompleteJointWeightEnum}

In this section, 
we give the concept of 
the $(\ell,r)$-fold complete joint weight enumerator over $\FF_q$. We also provide the MacWilliams identity for the $(\ell,r)$-fold complete joint weight enumerators.

\begin{df}\label{Def:rsfoldCJWE}
We denote, $\Pi^{\ell} := C_{1} \times\cdots\times C_{\ell}$,
where $C_{1},\ldots,C_{\ell}$ be the linear codes of length $n$ over $\FF_q$. We call $\Pi^{\ell}$ as \emph{$\ell$-fold joint code} of $C_{1},\ldots,C_{\ell}$.
We denote an element of $\Pi^{\ell}$ by 
\[
	{\tilde{\bm {c}}}:= ({\bm c_{1}},\ldots,{\bm c_{n}}):=
	\begin{pmatrix}
		a_{11}&\ldots&a_{1n}\\
		a_{21}&\ldots&a_{2n}\\
		\vdots&\cdots&\vdots\\
		a_{\ell 1}&\ldots&a_{\ell n}
	\end{pmatrix},
\]
where 
${\bm c_{i}}:={}^t(a_{1i},\ldots,a_{\ell i})\in \FF_q^{\ell}$ 
and 
$\mu_{j}(\tilde{\bm{c}}):= (a_{j1},\ldots, a_{jn}) \in C_{j}$. 

Now let $\Pi_{1}^{\ell},\ldots,\Pi_{r}^{\ell}$ 
be the $\ell$-fold joint codes (not necessarily the same) 
over~$\FF_q$. 
For $k \in \{1,\ldots,r\}$, 
we denote, $\Pi_{k}^{\ell} := C_{k1} \times \cdots \times C_{k\ell}$, 
where $C_{k1},\ldots,C_{k\ell}$ 
be the linear codes of length~$n$ over $\FF_q$.
An element of $\Pi_{k}^{\ell}$ is denoted by
\[
	\tilde{\bm {c}}_{k}:= 
	({\bm c_{k1}},\ldots,{\bm c_{kn}}):=
	\begin{pmatrix}
		a_{11}^{(k)}&\ldots&a_{1n}^{(k)}\\
		a_{21}^{(k)}&\ldots&a_{2n}^{(k)}\\
		\vdots&\cdots&\vdots\\
		a_{\ell 1}^{(k)}&\ldots&a_{\ell n}^{(k)}
	\end{pmatrix},
\]
where 
${\bm c_{ki}}:={}^t(a_{1i}^{(k)},\ldots,a_{\ell i}^{(k)})\in \FF_q^{\ell}$. 
and 
$\mu_{j}(\tilde{\bm{c}}_{k}):= (a_{j1}^{(k)},\ldots, a_{jn}^{(k)}) \in C_{kj}$.
Then the \emph{$(\ell,r)$-fold complete joint weight enumerator} of $\Pi_{1}^{\ell},\ldots,\Pi_{r}^{\ell}$ is defined as follows:
\[
	\J_{\Pi_{1}^{\ell},\ldots,\Pi_{r}^{\ell}}(x_{\bm a} : \bm a \in \FF_q^{\ell \times r}) := 
	\sum_{\tilde{\bm {c}}_1\in \Pi_{1}^{\ell},\ldots,\tilde{\bm {c}}_r \in \Pi_{r}^{\ell}}
	\prod x_{\bm {a}}^{n_{\bm a}(\tilde{\bm {c}}_1,\ldots, \tilde{\bm {c}}_r)},
\]
where ${n_{\bm a}(\tilde{\bm {c}}_1,\ldots, \tilde{\bm {c}}_r)}$ 
denotes the number of $i$ such that 
${\bm a}=(\bm c_{1i},\ldots,\bm c_{ri})$.
For $r=2$ and $\ell = 1$ 
the complete $(\ell,r)$-fold joint weight enumerator coincide with complete joint weight enumerator (Definition~\ref{Def:CompJWE}).
\end{df}



We have the MacWilliams identity for the complete weight enumerator of a code $C$ over $\FF_q$ from~\cite{MMC1972}. Before giving the MacWilliams identity, 
{we would like to review~\cite{MMC1972} for some basic definitions in coding theory}. 
Let $\FF_{q}$ be the finite field,
where $q =  p^{f}$ for some prime number $p$.
Then we define the \emph{inner product} of two vectors $\bm{u},\bm{v} \in \FF_q^n$ as
\[
	\bm{u} \cdot \bm{v} := u_1v_1 + u_2v_2 + \cdots + u_nv_n,
\]
where $\bm{u} = (u_1,u_2,\ldots, u_n)$ and $\bm{v} = (v_1,v_2,\ldots,v_n)$.  
A \emph{character} $\chi$ of $\FF_{q}$ 
is a homomorphism from the additive group $\FF_{q}$ 
to the multiplicative group of non-zero complex numbers.
Now let $F(x)$ be a primitive irreducible polynomial 
of degree $f$ over $\FF_{p}$ 
and let $\lambda$ be a root of $F(x)$.
Then any element $\alpha \in \FF_{q}$ 
has a unique representation as:
\begin{equation}\label{EquAlpha}
	\alpha 
	=
	\alpha_{0} + \alpha_{1} \lambda	
	+ \alpha_{2} \lambda^{2}
	+ \cdots +
	\alpha_{f-1} \lambda^{f-1},
\end{equation} 
where $\alpha_{i} \in \FF_{p}$.
We define $\chi(\alpha) := \eta_{p}^{\alpha_{0}}$, 
where $\eta_{p}$ is the $p$-th primitive root of unity,
and $\alpha_{0}$ is given by Equation~(\ref{EquAlpha}).

\begin{thm}[\cite{MMC1972}]\label{Th:MacWilliams}
	Let $C$ be a linear code of length~$n$ over $\FF_q$. 
	Let $\mathcal{C}_{C}(x_a : a \in \FF_q)$ be the complete weight enumerator of $C$.
	Then we have
	\[
		\mathcal{C}_{C^\perp}
		(x_a : a \in \FF_q) 
		= 
		\dfrac{1}{|C|} 
		T \cdot \mathcal{C}_C(x_a),
	\]
	where $T = \left(\chi(ab)\right)_{a,b \in \FF_q}$.
\end{thm} 

For an $\ell$-fold joint code $\Pi^{\ell}$ over $\FF_q$, 
let ${\Pi^{\ell}}^\perp := C_1^\perp \times \cdots \times C_{\ell}^\perp$ 
and $|{\Pi^{\ell}}| := |C_1|\times \cdots \times |C_{\ell}|$. We again let $\widehat{\Pi}^{\ell}$ be either $\Pi^{\ell}$ or ${\Pi^{\ell}}^\perp$. Then we define
\[
	\delta(\Pi^{\ell},\widehat{\Pi}^{\ell}) := 
	\begin{cases}
		0 & \mbox{if} \quad \widehat{\Pi}^{\ell} = \Pi^{\ell}, \\
		1 & \mbox{if} \quad \widehat{\Pi}^{\ell} = {\Pi^{\ell}}^\perp.  
	\end{cases}
\]

\begin{thm}\label{Th:GenMacWilliams}
	The MacWilliams identity for the $(\ell,r)$-fold complete joint weight enumerator of $\ell$-fold joint codes $\Pi_{1}^{\ell},\ldots,\Pi_{r}^{\ell}$ over $\FF_q$ is given by
	\begin{multline*}
		\J_{\widehat{\Pi}_{1}^{\ell},\ldots,\widehat{\Pi}_{r}^{\ell}}
		({x_{\bm a} : \bm a \in \FF_q^{\ell \times r}}) 
		= 
		\dfrac{1}{|\Pi_{1}^{\ell}|^{\delta(\Pi_{1}^{\ell},\widehat{\Pi}_{1}^{\ell})} \cdots|\Pi_{r}^{\ell}|^{\delta(\Pi_{r}^{\ell},\widehat{\Pi}_{r}^{\ell})}}\\ 
		\left(T^{\delta(\Pi_{1}^{\ell},\widehat{\Pi}_{1}^{\ell})}\right)^{\otimes\ell} 
		\otimes \cdots \otimes 
		\left(T^{\delta(\Pi_{r}^{\ell},\widehat{\Pi}_{r}^{\ell})}\right)^{\otimes\ell}  
		\J_{\Pi_{1}^{\ell},\ldots,\Pi_{r}^{\ell}}(x_{\bm a}).
	\end{multline*}
	{Here $T^{0}$ means the identity matrix~$I$}.
\end{thm}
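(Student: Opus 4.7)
The plan is to iterate a Poisson-summation/character-theoretic argument once per $\ell$-fold joint code $\Pi_k^\ell$ that is to be dualized, reducing everything to a series of applications of the classical MacWilliams identity (Theorem~\ref{Th:MacWilliams}). First I would establish the single-dualization case (fix a $k$ and replace $\Pi_k^\ell$ by $(\Pi_k^\ell)^\perp$, leaving the other $\Pi_j^\ell$ alone), and then iterate over the set of indices $k$ with $\hat{\Pi}_k^\ell=(\Pi_k^\ell)^\perp$.

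For the single-dualization step, note that $\Pi_k^\ell=C_{k1}\times\cdots\times C_{k\ell}$ implies $(\Pi_k^\ell)^\perp=C_{k1}^\perp\times\cdots\times C_{k\ell}^\perp$, so dualizing $\Pi_k^\ell$ amounts to dualizing each of the $\ell$ factors $C_{kj}$ in turn. The $(\ell,r)$-fold complete joint weight enumerator factors across the $n$ positions, and within each position the contribution of $\bm c_{ki}={}^t(a_{1i}^{(k)},\ldots,a_{\ell i}^{(k)})$ decomposes over the $\ell$ rows. Applying the character-sum identity $\sum_{\bm c\in C_{kj}}\chi(\bm c\cdot\bm v)=|C_{kj}|\cdot[\bm v\in C_{kj}^\perp]$ to each $C_{kj}$ produces the MacWilliams matrix $T=(\chi(ab))_{a,b\in\FF_q}$ acting on the indeterminate index for the $j$-th row of the $k$-th column of $\bm a\in\FF_q^{\ell\times r}$. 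Accumulating over $j=1,\ldots,\ell$ yields the transform $T^{\otimes\ell}$ acting on the $k$-th column and the normalization $1/|\Pi_k^\ell|=1/\prod_j|C_{kj}|$, which proves the single-dualization case.

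For the iteration, transforms associated to different $k$ act on disjoint columns of $\bm a$ and hence commute; performing the single-dualization step independently for each $k$ with $\hat{\Pi}_k^\ell=(\Pi_k^\ell)^\perp$ accumulates the normalization $\prod_k|\Pi_k^\ell|^{-\delta(\Pi_k^\ell,\hat{\Pi}_k^\ell)}$ and the tensor $\bigl(T^{\delta(\Pi_1^\ell,\hat{\Pi}_1^\ell)}\bigr)^{\otimes\ell}\otimes\cdots\otimes\bigl(T^{\delta(\Pi_r^\ell,\hat{\Pi}_r^\ell)}\bigr)^{\otimes\ell}$ in precisely the form required. The main obstacle I anticipate is the bookkeeping of which indeterminate index is affected by each MacWilliams transform: the variables $x_{\bm a}$ are indexed by $\bm a\in\FF_q^{\ell\times r}$, and I need to ensure that the transform arising from dualizing $C_{kj}$ acts on the $(j,k)$-slot and nothing else. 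An alternative that sidesteps this is to view the $(\ell,r)$-fold complete joint weight enumerator as the ordinary complete joint weight enumerator of the $r\ell$ individual codes $C_{kj}$, with the columns of $\bm a$ grouped for dualization, and then invoke an $r\ell$-fold generalization of Theorem~\ref{Th:MacWilliams} with the restriction that dualizations occur in column blocks.
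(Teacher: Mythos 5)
Your proposal is correct and follows essentially the same route as the paper: the paper likewise reduces to the single-dualization case (replacing only $\Pi_k^{\ell}$ by its dual), expresses the indicator of $({\Pi_k^{\ell}})^\perp$ as the character sum $\delta_{({\Pi^{\ell}})^\perp}(\bm v)=\frac{1}{|\Pi^{\ell}|}\sum_{\tilde{\bm c}\in\Pi^{\ell}}\chi(\langle\tilde{\bm c},\bm v\rangle)$, factors the resulting expression over the $n$ coordinate positions and the $\ell$ rows, and reads off the $T^{\otimes\ell}$ transform acting on the $k$-th column of $\bm a$. The commuting-transforms iteration over the dualized indices is exactly the paper's implicit ``it is sufficient to show'' step.
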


\begin{proof}
	It is sufficient to show
	\begin{multline*}
		|\Pi_{k}^{\ell}|
		\J_{\Pi_{1}^{\ell},\ldots,{\Pi_{k-1}^{\ell}},{\Pi_{k}^{\ell}}^\perp,{\Pi_{k+1}^{\ell}}\ldots{\Pi_{r}^{\ell}}}
		(x_{\bm a} : \bm a \in \FF_q^{\ell \times r}) = \\ 
		(I^{\otimes\ell} 
		\otimes \cdots \otimes 
		I^{\otimes\ell}
		\otimes
		\underset{k\text{-th}}{T^{\otimes\ell}}
		\otimes
		I^{\otimes\ell}
		\otimes \cdots \otimes 
		I^{\otimes\ell})
		\J_{\Pi_{1}^{\ell},\ldots,\Pi_{k}^{\ell},\ldots,\Pi_{r}^{\ell}}(x_{\bm a}),
	\end{multline*}
	that is, $\widehat{\Pi}_{k}^{\ell} = {\Pi_{k}^{\ell}}^{\perp}$, 
	and for $j \neq k$, $\widehat{\Pi}_{j}^{\ell} = \Pi_{j}^{\ell}$, 
	and $I$ is the identity matrix. 
	Let	
	$\delta_{{\Pi^{\ell}}^\perp}(\bm v) 
	:= 
	\delta_{C_1^\perp}(\mu_1(\bm v)),\ldots,\delta_{C_{\ell}^\perp}(\mu_{\ell}(\bm v))$, 
	where $\bm v \in \FF_q^{\ell\times n}$. Let
	\[
		\delta_{C_j^\perp}(\mu_j(\bm v)) := 
		\begin{cases}
			1 & \mbox{if} \quad \mu_j(\bm v) \in C_j^\perp, \\
			0 & \mbox{otherwise}.
		\end{cases} 
	\]
	Then we have the the following identity
	\[
		\delta_{C_j^\perp}(\mu_j(\bm v)) 
		= \dfrac{1}{|C_j|} \sum_{u_j \in C_j} \chi(u_j \cdot \mu_j(\bm v)).
	\]
	Hence we can write
	\[
		\delta_{{\Pi^{\ell}}^\perp}(\bm v) = 
		\dfrac{1}{|\Pi^{\ell}|} 
		\sum_{\tilde{\bm {c}} \in \Pi^{\ell}} \chi(\langle\tilde{\bm {c}},\bm v\rangle),
	\]
	where $\langle\tilde{\bm {c}},\bm v\rangle := 
	\sum_{i = 1}^{n} \mu_{i}(\tilde{\bm c}) \cdot \mu_i(\bm v)$. 
	Now
	\begin{align*}
		&|\Pi_{k}^{\ell}|
		\J_{\Pi_{1}^{\ell},\ldots,{\Pi_{k-1}^{\ell}},
		{\Pi_{k}^{\ell}}^\perp,{\Pi_{k+1}^{\ell}},\ldots,{\Pi_{r}^{\ell}}}
		(x_{\bm a} : \bm a \in \FF_q^{\ell \times r}) \\ 
		= 
		&|\Pi_{k}^{\ell}|
		\sum_{\tilde{\bm {c}}_1 \in \Pi_{1}^{\ell},\ldots, \tilde{\bm {c}}_{k-1}\in \Pi_{k-1}^{\ell},\tilde{\bm {c}}_{k+1}\in \Pi_{k+1}^{\ell},\ldots,\tilde{\bm {c}}_{r}\in \Pi_{r}^{\ell}} 
		\sum_{\tilde{\bm{d}}_{k} \in {\Pi_{k}^{\ell}}^\perp} 
		\prod_{\bm a \in \FF_q^{\ell \times r}} 
		x_{\bm a}^{n_{\bm a}(\tilde{\bm{c}}_1,\ldots,\tilde{\bm{d}}_k,\ldots,\tilde{\bm{c}}_r)} \\
		= 
		&|\Pi_{k}^{\ell}|
		\sum_{\tilde{\bm {c}}_1,\ldots, \tilde{\bm {c}}_{k-1},\tilde{\bm {c}}_{k+1},\ldots, \tilde{\bm {c}}_{r}}
		\sum_{\bm v \in \FF_q^{\ell \times n}} 
		\delta_{{\Pi_{k}^{\ell}}^\perp}(\bm v) 
		\prod_{\bm a \in \FF_q^{\ell \times r}} 
		x_{\bm a}^{n_{\bm a}(\tilde{\bm{c}}_1,\ldots,\tilde{\bm{c}}_{k-1}, \bm{v},\tilde{\bm{c}}_{k+1},\ldots,\tilde{\bm{c}}_r)} \\
		= 
		& \sum_{\tilde{\bm {c}}_1, \ldots, \tilde{\bm {c}}_{k-1},\tilde{\bm {c}}_{k+1},\ldots,\tilde{\bm {c}}_{r}}
		\sum_{\bm v \in \FF_q^{\ell \times n}}
		\sum_{\tilde{\bm {c}}_k \in \Pi_{k}^{\ell}} 
		\chi(\langle\tilde{\bm {c}}_k,\bm v\rangle)
		\prod_{\bm a \in \FF_q^{\ell \times r}} 
		x_{\bm a}^
		{n_{\bm a}(\tilde{\bm{c}}_1,\ldots,\tilde{\bm{c}}_{k-1}, \bm{v},\tilde{\bm{c}}_{k+1},\ldots,\tilde{\bm{c}}_r)} \\
		= 
		& \sum_{\tilde{\bm {c}}_1,\ldots, \tilde{\bm {c}}_{k-1},\tilde{\bm{c}}_{k},\tilde{\bm {c}}_{k+1},\ldots,\tilde{\bm {c}}_{r}}
		\sum_{\bm v \in \FF_q^{\ell \times n}}
		\chi(\langle\tilde{\bm {c}}_k, \bm v\rangle)
		\prod_{\bm a \in \FF_q^{\ell \times r}} 
		x_{\bm a}^{n_{\bm a}(\tilde{\bm{c}}_1,\ldots,\tilde{\bm{c}}_{k-1}, \bm{v},\tilde{\bm{c}}_{k+1},\ldots,\tilde{\bm{c}}_r)} \\
		= 
		& \sum_{\tilde{\bm {c}}_1,\ldots, \tilde{\bm {c}}_{k-1}, \tilde{\bm{c}}_{k},			\tilde{\bm{c}}_{k+1},\ldots,\tilde{\bm {c}}_{r}}
		\sum_{\bm v \in \FF_q^{\ell \times n}}
		\chi(\mu_{1}(\tilde{\bm{c}}_k) \cdot \mu_{1}(\bm{v}) + \dots + \mu_{\ell}(\tilde{\bm{c}}_k) \cdot \mu_{\ell}(\bm{v}))\\ 
		&\hspace{4 cm}
		\prod_{\bm a \in \FF_q^{\ell \times r}} 
		x_{\bm a}^{n_{\bm a}(\tilde{\bm{c}}_1,\ldots,\tilde{\bm{c}}_{k-1}, \bm{v},\tilde{\bm{c}}_{k+1},\ldots,\tilde{\bm{c}}_r)} \\
		= 
		&\sum_{\tilde{\bm {c}}_1,\ldots, \tilde{\bm {c}}_{k-1},\tilde{\bm{c}}_{k},\tilde{\bm {c}}_{k+1},\ldots,\tilde{\bm {c}}_{r}}
		\sum_{\bm v_1, \ldots, \bm v_n \in \FF_q^{\ell}} 
		\chi(\bm c_{k1} \cdot \bm v_1 + \cdots + \bm c_{kn} \cdot \bm v_n)\\
		&\hspace{4 cm} 
		\prod_{1\leq i \leq n} 
		x_{\bm c_{1i}\ldots \bm c_{(k-1)i} \bm v_{i} \bm c_{(k+1)i}\ldots \bm c_{ri}} \\
		= 
		&\sum_{\tilde{\bm {c}}_1,\ldots, \tilde{\bm {c}}_{k-1},\tilde{\bm{c}}_{k},\tilde{\bm {c}}_{k+1},\ldots,\tilde{\bm {c}}_{r}}
		\prod_{1\leq i \leq n} 
		\left(\sum_{\bm v_i \in \FF_q^{\ell}} 
		\chi(\bm c_{ki} \cdot \bm v_i) 
		x_{\bm c_{1i}\ldots \bm c_{(k-1)i} \bm v_{i} \bm c_{(k+1)i}\ldots \bm c_{ri}}\right) \\
		= 
		&\sum_{\tilde{\bm {c}}_1,\ldots, \tilde{\bm {c}}_{k-1},\tilde{\bm{c}}_{k},\tilde{\bm {c}}_{k+1},\ldots,\tilde{\bm {c}}_{r}}
		\prod_{\bm a = (\bm a_{1},\ldots,\bm a_{k},\ldots,\bm a_{r}) \in \FF_q^{\ell \times r}}\\
		& \hspace{3 cm} 
		\left(\sum_{\bm v \in \FF_q^{\ell}} 
		\chi(\bm a_{k} \cdot \bm v) 
		x_{\bm a_1 \ldots \bm a_{k-1} \bm v \bm a_{k+1} \ldots \bm a_{r}}\right)^
		{n_{\bm a}(\tilde{\bm {c}}_1,\ldots,\tilde{\bm{c}}_k,\ldots,\tilde{\bm{c}}_r)}\\
		= 
		&\J_{\Pi_{1}^{\ell},\ldots,\Pi_{k}^{\ell},\ldots,\Pi_{r}^{\ell}}\\
		&\hspace{0.2 cm}
		\left(\sum_{\bm v \in \FF_q^{\ell}} \chi(\bm a_{k} \cdot \bm v) 
		x_{\bm a_1 \ldots \bm a_{k-1} \bm v \bm a_{k+1} \ldots \bm a_{r}}
		: 
		(\bm a_{1},\ldots,\bm a_{k},\ldots,\bm a_{r}) \in \FF_q^{\ell \times r}\right) \\
		=
		&(I^{\otimes\ell}
		\otimes \cdots \otimes 
		I^{\otimes\ell}
		\otimes
		T^{\otimes\ell}
		\otimes
		I^{\otimes\ell}
		\otimes \cdots \otimes 
		I^{\otimes\ell})
		\J_{\Pi_{1}^{\ell},\ldots,\Pi_{k}^{\ell},\ldots,\Pi_{r}^{\ell}}(x_{\bm a})
	\end{align*}
	as it should. Hence the proof is completed.
\end{proof}

\section{$r$-fold Complete Joint Cycle Index}\label{Sec:rCompleteJointCycleIndex}

Let $G_1,G_2,\ldots,G_r$ be $r$ permutation groups on a set $\Omega$, where $|\Omega|=n$. Again let $\G_{G_1,\ldots,G_r} := G_1 \times \cdots \times G_r$ be the direct product of $G_1,G_2,\ldots,G_r$. For any element $(g_1,g_2,\ldots,g_r) \in \G_{G_1,\ldots,G_r}$, where $g_k \in G_k$ for $k \in \{1,2, \ldots, r\}$, we can decompose each permutation $g_k$ into a product of disjoint cycles. Let $c(g_k,i)$ be the number of $i$-cycles occurring by the action of~$g_k$. Now the \emph{$r$-fold complete joint cycle index} of $G_1,G_2,\ldots,G_r$ is the polynomial 
\begin{align*}
	&\CZ_{G_1,\ldots,G_r}(s((g_{1},\ldots,g_{r}),i))\\ &:=\CZ(\G_{G_1,\ldots,G_r};s((g_{1},\ldots,g_{r}),i): (g_{1},\ldots,g_{r}) \in \G_{G_1,\ldots,G_r}, i\in\NN)
\end{align*}
in indeterminates $s((g_{1},\ldots,g_{r}),i)$, where $(g_{1},\ldots,g_{r}) \in \G_{G_1,\ldots,G_r}$ and $i \in \NN$, given by
\begin{align*}
	& \CZ_{G_1,\ldots,G_r}(s((g_{1},\ldots,g_{r}),i))\\
	& := \sum_{(g_1,\ldots,g_r) \in \G_{G_1,\ldots,G_r}} \prod_{i \in \NN} s((g_1,\ldots,g_r),i)^{c(g_1 \cdots g_r,i)}.
\end{align*}
where $g_1\cdots g_r $ denotes the \emph{product of permutations} 
$g_1,\ldots,g_r$ which acts on $\Omega$ as 
$(g_1\cdots g_r)(\alpha) = g_r(\cdots g_{1}(\alpha)\cdots)$ 
for any $\alpha \in \Omega$. If~$G_1=\cdots=G_r=G$ (say), then we call $\CZ(\G_{G,\ldots,G};s((g_{1},\ldots,g_{r}),i): (g_{1},\ldots,g_{r}) \in \G_{G,\ldots,G}, i\in\NN)$ 
the \emph{$r$-fold complete multi-joint cycle index} of~$G$.

\begin{df}\label{Def:rfoldCJCI}
We construct from $\Pi^{\ell}$ a permutation group $G(\Pi^{\ell})$.
The group we construct is the additive group of $\Pi^{\ell}$.
We let it act on the set 
$\{1,\ldots,n\}\times \FF_q^{\ell}$ in the following way:
$({\bm c_{1}},\ldots,{\bm c_{n}})$
acts as the permutation
\[
	\left(i,
	\begin{pmatrix}
		x_{1}\\
		x_{2}\\
		\vdots\\
		x_{\ell}
	\end{pmatrix}
	\right)\mapsto 
	\left(i,
	\begin{pmatrix}
		x_{1}+a_{1i}\\
		x_{2}+a_{2i}\\
		\vdots\\
		x_{\ell}+a_{\ell i}
	\end{pmatrix}
	\right) 
\]
of the set $\{1,\ldots,n\}\times \FF_q^{\ell}$.
Now let $G_{1}(\Pi_{1}^{\ell}),\ldots,G_{r}(\Pi_{r}^{\ell})$ be $r$ permutation groups.
We define the \emph{product} of $r$ permutations $({\bm c_{11}}, \ldots, {\bm c_{1n}}) \in \Pi_{1}^{\ell}, \ldots, ({\bm c_{r1}}, \ldots, {\bm c_{rn}}) \in \Pi_{r}^{\ell}$ as follows:
\[
	\left(i,
	\begin{pmatrix}
		x_{1}\\
		x_{2}\\
		\vdots\\
		x_{\ell}
	\end{pmatrix}
	\right)\mapsto 
	\left(i,
	\begin{pmatrix}
		x_{1}+\sum_{k=1}^{r}a_{1i}^{(k)}\\
		x_{2}+\sum_{k=1}^{r}a_{2i}^{(k)}\\
		\vdots\\
		x_{\ell}+\sum_{k=1}^{r}a_{\ell i}^{(k)}
	\end{pmatrix}
	\right) 
\]
of a set $\{1, \dots, n\} \times \FF_q^{\ell}$. 
Let $\G_{\Pi_{1}^{\ell},\ldots,\Pi_{r}^{\ell}} := G_{1}(\Pi_{1}^{\ell}) \times \cdots \times G_{r}(\Pi_{r}^{\ell})$. We call the $r$-fold complete joint cycle index 
\begin{multline*}
	\CZ_{\Pi_{1}^{\ell},\ldots,\Pi_{r}^{\ell}}(s((g_1,\ldots,g_r),i))\\ :=\CZ(\G_{\Pi_{1}^{\ell},\ldots,\Pi_{r}^{\ell}};s((g_1,\ldots,g_r),i): (g_1,\ldots,g_r)\in \G_{\Pi_{1}^{\ell},\ldots,\Pi_{r}^{\ell}}, i\in\NN)
\end{multline*}
the $r$-fold complete joint cycle index for $\Pi_{1}^{\ell},\ldots,\Pi_{r}^{\ell}$.
\end{df}

\begin{rem}
	Let $\Pi_{1}^{\ell} = \cdots = \Pi_{r}^{\ell} = \Pi^{\ell}$, where
	\[
		\Pi^{\ell} := C_1\times\cdots \times C_{\ell},
	\]
	for the $\FF_q$-linear codes $C_1,\ldots,C_{\ell}$ of length~$n$. Then we call
	\[
		\CZ(\G_{\Pi^{\ell},\ldots,\Pi^{\ell}};s((g_1,\ldots,g_r),i): (g_1,\ldots,g_r)\in \G_{\Pi^{\ell},\ldots,\Pi^{\ell}}, i\in\NN)
	\]
	the \emph{$r$-fold complete multi-joint cycle index for $\Pi^{\ell}$}. 
	
	Again let $C_1 = \cdots = C_{\ell} = C$, for some $\FF_q$-linear code $C$ of length $n$. Then we denote $\Pi^{\ell}$ by $C^{\ell}$, that is,	
	\[
		C^{\ell}:=\underbrace{C\times \cdots\times C}_{\ell}.
	\]
	We call 
	$\CZ(\G_{C^{\ell},\ldots,C^{\ell}};s((g_1,\ldots,g_r),i): (g_1,\ldots,g_r)\in \G_{C^{\ell},\ldots,C^{\ell}}, i\in\NN)$
	the \emph{$r$-fold complete multi-joint cycle index for $C^{\ell}$}. Note that if $r = 1$,  the 
	$r$-fold complete multi-joint cycle index for $C^{\ell}$ 
	coincide with 
	the \emph{complete cycle index of genus $\ell$} for code~$C$ (Definition~\ref{Def:MiezakiOura}).
\end{rem}

We now give an example.

\begin{ex}\label{Ex:Example2}
	Let
	\begin{align*}
		C:=\left\{
		\begin{pmatrix}
			0\\
			0
		\end{pmatrix}, 
		\begin{pmatrix}
			0\\
			1
		\end{pmatrix}, 
		\begin{pmatrix}
			1\\
			0
		\end{pmatrix}, 
		\begin{pmatrix}
			1\\
			1
		\end{pmatrix} 
		\right\},
		D:=\left\{
		\begin{pmatrix}
			0\\
			0
		\end{pmatrix},
		\begin{pmatrix}
			1\\
			1
		\end{pmatrix} 
		\right\}.
	\end{align*}
	Now let 
	\begin{align*}
		\Pi_{1}^{2} & := C \times D \\
		& = 
		\left\{
			\begin{pmatrix}
				0 & 0\\
				0 & 0
			\end{pmatrix}, 
			\begin{pmatrix}
				0 & 0\\
				1 & 1
			\end{pmatrix}, 
			\begin{pmatrix}
				0 & 1\\
				0 & 0
			\end{pmatrix}, 
			\begin{pmatrix}
				0 & 1\\
				1 & 1
			\end{pmatrix},
			\begin{pmatrix}
				1 & 0\\
				0 & 0
			\end{pmatrix},
			\begin{pmatrix}
				1 & 0\\
				1 & 1
			\end{pmatrix},\right. \\
			& \quad \quad
			\left.\begin{pmatrix}
				1 & 1\\
				0 & 0
			\end{pmatrix},
			\begin{pmatrix}
				1 & 1\\
				1 & 1
			\end{pmatrix}
		\right\}, 
		\text{ and }\\
		\Pi_{2}^{2} & := D \times D\\
		& =
		\left\{
			\begin{pmatrix}
				0 & 0\\
				0 & 0
			\end{pmatrix}, 
			\begin{pmatrix}
				0 & 0\\
				1 & 1
			\end{pmatrix}, 
			\begin{pmatrix}
				1 & 1\\
				0 & 0
			\end{pmatrix},
			\begin{pmatrix}
				1 & 1\\
				1 & 1
			\end{pmatrix}
		\right\}.
	\end{align*}
	Then the $(2,2)$-fold complete joint weight enumerator of $\Pi_{1}^{2}$ and $\Pi_{2}^{2}$ is 
	\begin{align*}
		& x_{\substack{00\\00}}^2 + x_{\substack{00\\01}}^2 + x_{\substack{01\\00}}^2 + x_{\substack{01\\01}}^2 +
		x_{\substack{00\\10}}^2 + x_{\substack{00\\11}}^2 + x_{\substack{01\\10}}^2 + x_{\substack{01\\11}}^2 +\\
		& x_{\substack{00\\00}}x_{\substack{10\\00}} + x_{\substack{00\\01}}x_{\substack{10\\01}} + x_{\substack{01\\00}}x_{\substack{11\\00}} + x_{\substack{01\\01}}x_{\substack{11\\01}} +
		x_{\substack{00\\10}}x_{\substack{10\\10}} + x_{\substack{00\\11}}x_{\substack{10\\11}} + x_{\substack{01\\10}}x_{\substack{11\\10}} + x_{\substack{01\\11}}x_{\substack{11\\11}} + \\
		& x_{\substack{10\\00}}x_{\substack{00\\00}} + x_{\substack{10\\01}}x_{\substack{00\\01}} + x_{\substack{11\\00}}x_{\substack{01\\00}} + x_{\substack{11\\01}}x_{\substack{01\\01}} +
		x_{\substack{10\\10}}x_{\substack{00\\10}} + x_{\substack{10\\11}}x_{\substack{00\\11}} + x_{\substack{11\\10}}x_{\substack{01\\10}} + x_{\substack{11\\11}}x_{\substack{01\\11}} + \\
		& x_{\substack{10\\00}}^2 + x_{\substack{10\\01}}^2 + x_{\substack{11\\00}}^2 + x_{\substack{11\\01}}^2 +
		x_{\substack{10\\10}}^2 + x_{\substack{10\\11}}^2 + x_{\substack{11\\10}}^2 + x_{\substack{11\\11}}^2.
	\end{align*}
	Let $G(\Pi_{1}^{2})$ and $H(\Pi_{2}^{2})$ are the permutation groups on $\{1,2\}\times {\FF_2^2}$.
	Then  
	\begin{align*}
		\CZ_{\Pi_{1}^{2},\Pi_{2}^{2}}&(s((g,h),i)) \\
		& = 
		s
		\left(\left(
		\begin{pmatrix}
			0 & 0\\
			0 & 0
		\end{pmatrix},
		\begin{pmatrix}
			0 & 0\\
			0 & 0
		\end{pmatrix}\right), 1
		\right)^4
		s
		\left(\left(
		\begin{pmatrix}
			0 & 0\\
			0 & 0
		\end{pmatrix},
		\begin{pmatrix}
			0 & 0\\
			0 & 0
		\end{pmatrix}\right), 1
		\right)^4\\
		&+
		s
		\left(\left(
		\begin{pmatrix}
			0 & 0\\
			0 & 0
		\end{pmatrix},
		\begin{pmatrix}
			0 & 0\\
			1 & 1
		\end{pmatrix}\right), 2
		\right)^2
		s
		\left(\left(
		\begin{pmatrix}
			0 & 0\\
			0 & 0
		\end{pmatrix},
		\begin{pmatrix}
			0 & 0\\
			1 & 1
		\end{pmatrix}\right), 2
		\right)^2\\
		&+
		s
		\left(\left(
		\begin{pmatrix}
			0 & 0\\
			0 & 0
		\end{pmatrix},
		\begin{pmatrix}
			1 & 1\\
			0 & 0
		\end{pmatrix}\right), 2
		\right)^2
		s
		\left(\left(
		\begin{pmatrix}
			0 & 0\\
			0 & 0
		\end{pmatrix},
		\begin{pmatrix}
			1 & 1\\
			0 & 0
		\end{pmatrix}\right), 2
		\right)^2\\
		&+
		s
		\left(\left(
		\begin{pmatrix}
			0 & 0\\
			0 & 0
		\end{pmatrix},
		\begin{pmatrix}
			1 & 1\\
			1 & 1
		\end{pmatrix}\right), 2
		\right)^2
		s
		\left(\left(
		\begin{pmatrix}
			0 & 0\\
			0 & 0
		\end{pmatrix},
		\begin{pmatrix}
			1 & 1\\
			1 & 1
		\end{pmatrix}\right), 2
		\right)^2\\
		&+
		s
		\left(\left(
		\begin{pmatrix}
			0 & 0\\
			1 & 1
		\end{pmatrix},
		\begin{pmatrix}
			0 & 0\\
			0 & 0
		\end{pmatrix}\right), 2
		\right)^2
		s
		\left(\left(
		\begin{pmatrix}
		0 & 0\\
		1 & 1
		\end{pmatrix},
		\begin{pmatrix}
		0 & 0\\
		0 & 0
		\end{pmatrix}\right), 2
		\right)^2\\
		&+
		s
		\left(\left(
		\begin{pmatrix}
		0 & 0\\
		1 & 1
		\end{pmatrix},
		\begin{pmatrix}
		0 & 0\\
		1 & 1
		\end{pmatrix}\right), 1
		\right)^4
		s
		\left(\left(
		\begin{pmatrix}
		0 & 0\\
		1 & 1
		\end{pmatrix},
		\begin{pmatrix}
		0 & 0\\
		1 & 1
		\end{pmatrix}\right), 1
		\right)^4\\
		&+
		s
		\left(\left(
		\begin{pmatrix}
		0 & 0\\
		1 & 1
		\end{pmatrix},
		\begin{pmatrix}
		1 & 1\\
		0 & 0
		\end{pmatrix}\right), 2
		\right)^2
		s
		\left(\left(
		\begin{pmatrix}
		0 & 0\\
		1 & 1
		\end{pmatrix},
		\begin{pmatrix}
		1 & 1\\
		0 & 0
		\end{pmatrix}\right), 2
		\right)^2\\
		&+
		s
		\left(\left(
		\begin{pmatrix}
		0 & 0\\
		1 & 1
		\end{pmatrix},
		\begin{pmatrix}
		1 & 1\\
		1 & 1
		\end{pmatrix}\right), 2
		\right)^2
		s
		\left(\left(
		\begin{pmatrix}
		0 & 0\\
		1 & 1
		\end{pmatrix},
		\begin{pmatrix}
		1 & 1\\
		1 & 1
		\end{pmatrix}\right), 2
		\right)^2\\
		&+
		s
		\left(\left(
		\begin{pmatrix}
		0 & 1\\
		0 & 0
		\end{pmatrix},
		\begin{pmatrix}
		0 & 0\\
		0 & 0
		\end{pmatrix}\right), 1
		\right)^4
		s
		\left(\left(
		\begin{pmatrix}
		0 & 1\\
		0 & 0
		\end{pmatrix},
		\begin{pmatrix}
		0 & 0\\
		0 & 0
		\end{pmatrix}\right), 2
		\right)^2\\
		&+
		s
		\left(\left(
		\begin{pmatrix}
		0 & 1\\
		0 & 0
		\end{pmatrix},
		\begin{pmatrix}
		0 & 0\\
		1 & 1
		\end{pmatrix}\right), 2
		\right)^2
		s
		\left(\left(
		\begin{pmatrix}
		0 & 1\\
		0 & 0
		\end{pmatrix},
		\begin{pmatrix}
		0 & 0\\
		1 & 1
		\end{pmatrix}\right), 2
		\right)^2\\
		&+
		s
		\left(\left(
		\begin{pmatrix}
		0 & 1\\
		0 & 0
		\end{pmatrix},
		\begin{pmatrix}
		1 & 1\\
		0 & 0
		\end{pmatrix}\right), 2
		\right)^2
		s
		\left(\left(
		\begin{pmatrix}
		0 & 1\\
		0 & 0
		\end{pmatrix},
		\begin{pmatrix}
		1 & 1\\
		0 & 0
		\end{pmatrix}\right), 1
		\right)^4\\
		&+
		s
		\left(\left(
		\begin{pmatrix}
		0 & 1\\
		0 & 0
		\end{pmatrix},
		\begin{pmatrix}
		1 & 1\\
		1 & 1
		\end{pmatrix}\right), 2
		\right)^2
		s
		\left(\left(
		\begin{pmatrix}
		0 & 1\\
		0 & 0
		\end{pmatrix},
		\begin{pmatrix}
		1 & 1\\
		1 & 1
		\end{pmatrix}\right), 2
		\right)^2\\
		&+
		s
		\left(\left(
		\begin{pmatrix}
		0 & 1\\
		1 & 1
		\end{pmatrix},
		\begin{pmatrix}
		0 & 0\\
		0 & 0
		\end{pmatrix}\right), 2
		\right)^2
		s
		\left(\left(
		\begin{pmatrix}
		0 & 1\\
		1 & 1
		\end{pmatrix},
		\begin{pmatrix}
		0 & 0\\
		0 & 0
		\end{pmatrix}\right), 2
		\right)^2\\
		&+
		s
		\left(\left(
		\begin{pmatrix}
		0 & 1\\
		1 & 1
		\end{pmatrix},
		\begin{pmatrix}
		0 & 0\\
		1 & 1
		\end{pmatrix}\right), 1
		\right)^4
		s
		\left(\left(
		\begin{pmatrix}
		0 & 1\\
		1 & 1
		\end{pmatrix},
		\begin{pmatrix}
		0 & 0\\
		1 & 1
		\end{pmatrix}\right), 2
		\right)^2\\
		&+
		s
		\left(\left(
		\begin{pmatrix}
		0 & 1\\
		1 & 1
		\end{pmatrix},
		\begin{pmatrix}
		1 & 1\\
		0 & 0
		\end{pmatrix}\right), 2
		\right)^2
		s
		\left(\left(
		\begin{pmatrix}
		0 & 1\\
		1 & 1
		\end{pmatrix},
		\begin{pmatrix}
		1 & 1\\
		0 & 0
		\end{pmatrix}\right), 2
		\right)^2\\
		&+
		s
		\left(\left(
		\begin{pmatrix}
		0 & 1\\
		1 & 1
		\end{pmatrix},
		\begin{pmatrix}
		1 & 1\\
		1 & 1
		\end{pmatrix}\right), 2
		\right)^2
		s
		\left(\left(
		\begin{pmatrix}
		0 & 1\\
		1 & 1
		\end{pmatrix},
		\begin{pmatrix}
		1 & 1\\
		1 & 1
		\end{pmatrix}\right), 1
		\right)^4\\
		&+
		s
		\left(\left(
		\begin{pmatrix}
		1 & 0\\
		0 & 0
		\end{pmatrix},
		\begin{pmatrix}
		0 & 0\\
		0 & 0
		\end{pmatrix}\right), 2
		\right)^2
		s
		\left(\left(
		\begin{pmatrix}
		1 & 0\\
		0 & 0
		\end{pmatrix},
		\begin{pmatrix}
		0 & 0\\
		0 & 0
		\end{pmatrix}\right), 1
		\right)^4\\
		&+
		s
		\left(\left(
		\begin{pmatrix}
		1 & 0\\
		0 & 0
		\end{pmatrix},
		\begin{pmatrix}
		0 & 0\\
		1 & 1
		\end{pmatrix}\right), 2
		\right)^2
		s
		\left(\left(
		\begin{pmatrix}
		1 & 0\\
		0 & 0
		\end{pmatrix},
		\begin{pmatrix}
		0 & 0\\
		1 & 1
		\end{pmatrix}\right), 2
		\right)^2\\
		&+
		s
		\left(\left(
		\begin{pmatrix}
		1 & 0\\
		0 & 0
		\end{pmatrix},
		\begin{pmatrix}
		1 & 1\\
		0 & 0
		\end{pmatrix}\right), 1
		\right)^4
		s
		\left(\left(
		\begin{pmatrix}
		1 & 0\\
		0 & 0
		\end{pmatrix},
		\begin{pmatrix}
		1 & 1\\
		0 & 0
		\end{pmatrix}\right), 2
		\right)^2\\
		&+
		s
		\left(\left(
		\begin{pmatrix}
		1 & 0\\
		0 & 0
		\end{pmatrix},
		\begin{pmatrix}
		1 & 1\\
		1 & 1
		\end{pmatrix}\right), 2
		\right)^2
		s
		\left(\left(
		\begin{pmatrix}
		1 & 0\\
		0 & 0
		\end{pmatrix},
		\begin{pmatrix}
		1 & 1\\
		1 & 1
		\end{pmatrix}\right), 2
		\right)^2\\
		&+
		s
		\left(\left(
		\begin{pmatrix}
		1 & 0\\
		1 & 1
		\end{pmatrix},
		\begin{pmatrix}
		0 & 0\\
		0 & 0
		\end{pmatrix}\right), 2
		\right)^2
		s
		\left(\left(
		\begin{pmatrix}
		1 & 0\\
		1 & 1
		\end{pmatrix},
		\begin{pmatrix}
		0 & 0\\
		0 & 0
		\end{pmatrix}\right), 2
		\right)^2\\
		&+
		s
		\left(\left(
		\begin{pmatrix}
		1 & 0\\
		1 & 1
		\end{pmatrix},
		\begin{pmatrix}
		0 & 0\\
		1 & 1
		\end{pmatrix}\right), 2
		\right)^2
		s
		\left(\left(
		\begin{pmatrix}
		1 & 0\\
		1 & 1
		\end{pmatrix},
		\begin{pmatrix}
		0 & 0\\
		1 & 1
		\end{pmatrix}\right), 1
		\right)^4\\
		&+
		s
		\left(\left(
		\begin{pmatrix}
		1 & 0\\
		1 & 1
		\end{pmatrix},
		\begin{pmatrix}
		1 & 1\\
		0 & 0
		\end{pmatrix}\right), 2
		\right)^2
		s
		\left(\left(
		\begin{pmatrix}
		1 & 0\\
		1 & 1
		\end{pmatrix},
		\begin{pmatrix}
		1 & 1\\
		0 & 0
		\end{pmatrix}\right), 2
		\right)^2\\
		&+
		s
		\left(\left(
		\begin{pmatrix}
		1 & 0\\
		1 & 1
		\end{pmatrix},
		\begin{pmatrix}
		1 & 1\\
		1 & 1
		\end{pmatrix}\right), 1
		\right)^4
		s
		\left(\left(
		\begin{pmatrix}
		1 & 0\\
		1 & 1
		\end{pmatrix},
		\begin{pmatrix}
		1 & 1\\
		1 & 1
		\end{pmatrix}\right), 2
		\right)^2\\
		&+
		s
		\left(\left(
		\begin{pmatrix}
		1 & 1\\
		0 & 0
		\end{pmatrix},
		\begin{pmatrix}
		0 & 0\\
		0 & 0
		\end{pmatrix}\right), 2
		\right)^2
		s
		\left(\left(
		\begin{pmatrix}
		1 & 1\\
		0 & 0
		\end{pmatrix},
		\begin{pmatrix}
		0 & 0\\
		0 & 0
		\end{pmatrix}\right), 2
		\right)^2\\
		&+
		s
		\left(\left(
		\begin{pmatrix}
		1 & 1\\
		0 & 0
		\end{pmatrix},
		\begin{pmatrix}
		0 & 0\\
		1 & 1
		\end{pmatrix}\right), 2
		\right)^2
		s
		\left(\left(
		\begin{pmatrix}
		1 & 1\\
		0 & 0
		\end{pmatrix},
		\begin{pmatrix}
		0 & 0\\
		1 & 1
		\end{pmatrix}\right), 2
		\right)^2\\
		&+
		s
		\left(\left(
		\begin{pmatrix}
		1 & 1\\
		0 & 0
		\end{pmatrix},
		\begin{pmatrix}
		1 & 1\\
		0 & 0
		\end{pmatrix}\right), 1
		\right)^4
		s
		\left(\left(
		\begin{pmatrix}
		1 & 1\\
		0 & 0
		\end{pmatrix},
		\begin{pmatrix}
		1 & 1\\
		0 & 0
		\end{pmatrix}\right), 1
		\right)^4\\
		&+
		s
		\left(\left(
		\begin{pmatrix}
		1 & 1\\
		0 & 0
		\end{pmatrix},
		\begin{pmatrix}
		1 & 1\\
		1 & 1
		\end{pmatrix}\right), 2
		\right)^2
		s
		\left(\left(
		\begin{pmatrix}
		1 & 1\\
		0 & 0
		\end{pmatrix},
		\begin{pmatrix}
		1 & 1\\
		1 & 1
		\end{pmatrix}\right), 2
		\right)^2\\
		&+
		s
		\left(\left(
		\begin{pmatrix}
		1 & 1\\
		1 & 1
		\end{pmatrix},
		\begin{pmatrix}
		0 & 0\\
		0 & 0
		\end{pmatrix}\right), 2
		\right)^2
		s
		\left(\left(
		\begin{pmatrix}
		1 & 1\\
		1 & 1
		\end{pmatrix},
		\begin{pmatrix}
		0 & 0\\
		0 & 0
		\end{pmatrix}\right), 2
		\right)^2\\
		&+
		s
		\left(\left(
		\begin{pmatrix}
		1 & 1\\
		1 & 1
		\end{pmatrix},
		\begin{pmatrix}
		0 & 0\\
		1 & 1
		\end{pmatrix}\right), 2
		\right)^2
		s
		\left(\left(
		\begin{pmatrix}
		1 & 1\\
		1 & 1
		\end{pmatrix},
		\begin{pmatrix}
		0 & 0\\
		1 & 1
		\end{pmatrix}\right), 2
		\right)^2\\
		&+
		s
		\left(\left(
		\begin{pmatrix}
		1 & 1\\
		1 & 1
		\end{pmatrix},
		\begin{pmatrix}
		1 & 1\\
		0 & 0
		\end{pmatrix}\right), 2
		\right)^2
		s
		\left(\left(
		\begin{pmatrix}
		1 & 1\\
		1 & 1
		\end{pmatrix},
		\begin{pmatrix}
		1 & 1\\
		0 & 0
		\end{pmatrix}\right), 2
		\right)^2\\
		&+
		s
		\left(\left(
		\begin{pmatrix}
		1 & 1\\
		1 & 1
		\end{pmatrix},
		\begin{pmatrix}
		1 & 1\\
		1 & 1
		\end{pmatrix}\right), 1
		\right)^4
		s
		\left(\left(
		\begin{pmatrix}
		1 & 1\\
		1 & 1
		\end{pmatrix},
		\begin{pmatrix}
		1 & 1\\
		1 & 1
		\end{pmatrix}\right), 1
		\right)^4
	\end{align*}
\end{ex}

Now we give the main result of this paper, which is a generalization of Theorem~\ref{Th:CompleteJointCycle}.

\begin{thm}[Main Theorem]\label{Thm:Main}
	For $k \in \{1,\ldots,r\}$ and $j \in \{1, \ldots, \ell\}$, let $C_{kj}$ be an $\FF_q$-linear code of length $n$, where $q$ is a power of the prime number $p$. Again let $\Pi_{k}^{\ell}$ be the $\ell$-fold joint code of $C_{k1},\dots,C_{k\ell}$.
	Let $\J_{\Pi_{1}^{\ell},\ldots,\Pi_{r}^{\ell}}(x_{\bm a}:{\bm a}\in \FF_q^{\ell\times r})$ be the~$(\ell,r)$-fold complete joint weight enumerator of $\Pi_{1}^{\ell},\ldots,\Pi_{r}^{\ell}$, and 
	\[
		\CZ(\G_{\Pi_{1}^{\ell},\ldots,\Pi_{r}^{\ell}};s((g_1,\ldots,g_r),i):(g_1,\ldots,g_r)\in \G_{\Pi_{1}^{\ell},\ldots,\Pi_{r}^{\ell}},i\in\NN)
	\] 
	be the $r$-fold complete joint cycle index for $\Pi_{1}^{\ell},\ldots,\Pi_{r}^{\ell}$. 
	
	Let $T$ be a map defined as follows: 
	for each $g_1 = (\bm {c}_{11},\ldots,\bm{c}_{1n})\in \Pi_{1}^{\ell},\ldots,g_r = (\bm {c}_{r1},\ldots,\bm{c}_{rn})\in \Pi_{r}^{\ell}$, and for $i\in \{1,\ldots,n\}$,\\ 
	if 	
	$\sum_{k=1}^{r}\bm c_{ki} = {\bm 0}$, then 
	\[
		s((g_1,\ldots,g_r),1)\mapsto x_{{\bm c_{1i}\ldots \bm c_{ri}}}^{1/q^{\ell}};
	\]
	if 
	$\sum_{k=1}^{r}\bm c_{ki} \neq {\bm 0}$, then 
	\[
		s((g_1,\ldots,g_r),p)\mapsto x_{{\bm c_{1i}\ldots \bm c_{ri}}}^{p/q^{\ell}}. 
	\]
	Then we have 
	\begin{align*}
		\J_{\Pi_{1}^{\ell},\ldots,\Pi_{r}^{\ell}}(& x_{\bm a}: {\bm a}\in \FF_q^{\ell\times r})
		= \\
		T(&\CZ(\G_{\Pi_{1}^{\ell},\ldots,\Pi_{r}^{\ell}};s((g_1,\ldots,g_r),i):(g_1,\ldots,g_r)\in \G_{\Pi_{1}^{\ell},\ldots,\Pi_{r}^{\ell}},i\in\NN)). 
	\end{align*}
\end{thm}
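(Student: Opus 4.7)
The plan is to mirror the proof of Theorem~\ref{Th:CompleteJointCycle} by a direct cycle-by-cycle analysis, now on the bigger set $\{1,\ldots,n\}\times \FF_q^{\ell}$ instead of $\{1,\ldots,n\}\times \FF_q$. Fix an element $(g_1,\ldots,g_r)\in \G_{\Pi_{1}^{\ell},\ldots,\Pi_{r}^{\ell}}$ with $g_k=(\bm c_{k1},\ldots,\bm c_{kn})$. By the definition of the product of permutations in Definition~\ref{Def:rfoldCJCI}, the permutation $g_1\cdots g_r$ sends $(i,\bm x)$ to $(i,\bm x+\sum_{k=1}^{r}\bm c_{ki})$. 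Hence the action decomposes fibrewise: the set $\{i\}\times \FF_q^{\ell}$ (which has $q^{\ell}$ points) is invariant and its cycle structure depends only on the shift vector $\bm s_i:=\sum_{k=1}^{r}\bm c_{ki}\in\FF_q^{\ell}$.

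Next I would analyse the two cases for each index $i$. If $\bm s_i=\bm 0$, then all $q^\ell$ points of the fibre are fixed, giving $q^\ell$ cycles of length $1$. If $\bm s_i\neq \bm 0$, then because $\FF_q$ has characteristic $p$, the element $\bm s_i$ has order exactly $p$ in the additive group $\FF_q^{\ell}$, so the fibre breaks into $q^\ell/p$ orbits each of length $p$. Letting $\wt(g_1,\ldots,g_r):=\#\{i\mid \bm s_i\neq \bm 0\}$, this shows that $(g_1,\ldots,g_r)$ contributes exactly
\[
s((g_1,\ldots,g_r),1)^{q^{\ell}(n-\wt(g_1,\ldots,g_r))}\,s((g_1,\ldots,g_r),p)^{(q^{\ell}/p)\wt(g_1,\ldots,g_r)}
\]
to $\CZ_{\Pi_1^\ell,\ldots,\Pi_r^\ell}$, and no other indeterminates $s((g_1,\ldots,g_r),i)$ appear.

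Then I apply the substitution $T$. On the $i$-th factor the rule gives $s((g_1,\ldots,g_r),1)^{q^{\ell}}\mapsto x_{\bm c_{1i}\ldots\bm c_{ri}}$ when $\bm s_i=\bm 0$ and $s((g_1,\ldots,g_r),p)^{q^{\ell}/p}\mapsto x_{\bm c_{1i}\ldots\bm c_{ri}}$ when $\bm s_i\neq \bm 0$, so the chosen exponents $1/q^{\ell}$ and $p/q^{\ell}$ are exactly what is needed to turn the cycle-index contribution into $\prod_{i=1}^{n}x_{\bm c_{1i}\ldots\bm c_{ri}}$. Summing over all $(g_1,\ldots,g_r)\in \G_{\Pi_1^\ell,\ldots,\Pi_r^\ell}$ and regrouping $\prod_{i=1}^{n}x_{\bm c_{1i}\ldots\bm c_{ri}}=\prod_{\bm a\in \FF_q^{\ell\times r}}x_{\bm a}^{n_{\bm a}(\tilde{\bm c}_1,\ldots,\tilde{\bm c}_r)}$ matches Definition~\ref{Def:rsfoldCJWE} for $\J_{\Pi_1^\ell,\ldots,\Pi_r^\ell}$.

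The only nontrivial step is the cycle-length computation in the second case, and it reduces to the standard fact that a nonzero vector in an elementary abelian $p$-group has order $p$; the remaining work is bookkeeping. Once this is settled, the two exponents appearing in the substitution $T$ are forced, which conversely explains why $T$ is defined the way it is. The rest of the proof is just combining these per-$i$ contributions multiplicatively and summing over the direct product, exactly as in Theorem~\ref{Th:CompleteJointCycle}.
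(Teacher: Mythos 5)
Your proposal is correct and follows essentially the same route as the paper's own proof: a fibrewise analysis of the product permutation on $\{1,\ldots,n\}\times\FF_q^{\ell}$, the same two-case cycle count ($q^{\ell}$ fixed points versus $q^{\ell}/p$ cycles of length $p$), the same contribution $s((g_1,\ldots,g_r),1)^{q^{\ell}(n-\wt)}s((g_1,\ldots,g_r),p)^{(q^{\ell}/p)\wt}$, and the same matching under $T$. You merely make explicit a couple of steps the paper leaves implicit (the invariance of each fibre and the order-$p$ argument), which is fine.
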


\begin{proof}
	Let $g_k=({\bm c_{k1}},\ldots,{\bm c_{kn}})\in \Pi_{k}^{\ell}$ for $k \in \{1,\ldots,r\}$, and 
	\[
		\wt^{(\ell,r)}(g_1,\ldots,g_r)=\sharp\{i\mid \sum_{k=1}^{r}{\bm c_{ki}}\neq {\bf 0}\}. 
	\]
	If $\sum_{k=1}^{r}{\bm c_{ki}}= \bm 0$, then the $q^{\ell}$ points 
	of the form $(i,{\bm x})\in \{1,\ldots,n\}\times \FF_q^{\ell}$ 
	are all fixed by this element; 
	if $\sum_{k=1}^{r}{\bm c_{ki}}\neq {\bm 0}$, they are
	permuted in $q^{\ell}/p$ cycles of length $p$. 
	Thus, $g_k=({\bm c_{k1}},\ldots,{\bm c_{kn}})\in \Pi_{k}^{\ell}$ for $k \in \{1,\ldots,r\}$ contribute 
	\[
		s((g_1,\ldots,g_r),1)^{q^{\ell}(n-\wt^{(\ell,r)}(g_1,\ldots,g_r))}s((g_1,\ldots,g_r),p)^{(q^{\ell}/p)\wt^{(\ell,r)}(g_1,\ldots,g_r)}
	\]
	to the sum in the formula for the $r$-fold complete joint cycle index, 
	and 
	\[
		\prod_{i=1}^{n} x_{{\bm c_{1i}}\ldots{\bm c_{ri}}}
	\]
	to the sum in the formula for the $(\ell,r)$-fold complete joint weight enumerator. 
	The result follows. 
\end{proof}

\section{Average Version of Main Theorem}\label{Sec:AverageCompleteJointCycleIndex}

In~\cite{Y1989}, the notion of the average joint weight enumerators was given. Further, the notion of the average $r$-fold complete joint weight enumerators was given in~\cite{CM}. The notion of the average intersection number of codes was investigated in~\cite{Y1991}. In this section, we give the concept of the average complete joint cycle index and provide a relation with average complete joint weight enumerator of codes. We also give an analogy of the Main Theorem for the average complete joint cycle index. Finally, we also give the notion of the average intersection number for permutation groups, and establish a connection with the average intersection number of codes.

\subsection{Average of Complete Joint Cycle Index}

Let $G$ and $G^\prime$ be two permutation groups on $\Omega$, where $|\Omega|=n$. We write $G^\prime \cong G$ if $G$ and $G^\prime$ are isomorphic as permutation groups. 

\begin{df}\label{Def:AvCCyI}
	Let $G_1,\ldots,G_r$ be $r$ permutation groups on a set $\Omega$, where $|\Omega| = n$. 
	Then the \emph{$(G_1,\ldots,G_r)$-average $r$-fold complete joint cycle index} of~$G_1,\ldots,G_r$ is the polynomial
	\begin{multline*}
		\CZ_{G_1,\ldots,G_r}^{av}(s((g_1^\prime,\ldots,g_r^\prime),i))	 := \CZ^{av}(\G_{G_1^\prime,\ldots,G_r^\prime};s((g_1^\prime,\ldots,g_r^\prime),i):\\
		G_1^\prime \cong G_1,\ldots, G_r^\prime \cong G_r, (g_1^\prime,\ldots,g_r^\prime) \in \G_{G_1^\prime,\ldots,G_r^\prime}, i\in\NN),
	\end{multline*}
	in indeterminates $s((g^\prime,\ldots,g_r^\prime),i)$ where $g_1^\prime \in G_1^\prime, \ldots,g_r^\prime\in G_r^\prime$, and $i \in \NN$ defined by 
	\begin{multline*}
		\CZ_{G_1,\ldots,G_r}^{av}(s((g_1^\prime,\ldots,g_r^\prime),i))\\ := \dfrac{1}{\prod_{k=1}^{r} N_{\cong}(G_k)}
		\sum_{G_1^\prime \cong G_1} \cdots \sum_{G_r^\prime \cong G_r} \CZ_{G_1^\prime,\ldots,G_r}(s((g_{1}^\prime,\ldots,g_{r}^\prime),i)),
	\end{multline*}
	where $N_{\cong}(G_k) := \sharp\{G_k^\prime \mid G_k^\prime \cong G_k\}$.
\end{df}

In this paper we only consider the case $G_1$-average complete joint cycle index. The study of the cases $(G_1,\ldots,G_r)$-average complete joint cycle indices will be discussed in some sequel papers.
Now the \emph{$G_1$-average $r$-fold complete joint cycle index} of~$G_1,\ldots,G_r$ is the polynomial
\begin{multline*}
	\CZ_{G_1,\ldots,G_r}^{av}(s((g_1^\prime,\ldots,g_r),i))	 := \CZ^{av}(\G_{G_1^\prime,\ldots,G_r};s((g_1^\prime,\ldots,g_r),i):\\
	G_1^\prime \cong G_1, (g_1^\prime,\ldots,g_r) \in \G_{G_1^\prime,\ldots,G_r}, i\in\NN),
\end{multline*}
in indeterminates $s((g^\prime,\ldots,g_r),i)$ where $g_1^\prime \in G_1^\prime$, $g_2 \in G_2,\ldots,g_r\in G_r$, and $i \in \NN$ defined by 
\[
	\CZ_{G_1,\ldots,G_r}^{av}(s((g_1^\prime,\ldots,g_r),i)) := \dfrac{1}{N_{\cong}(G_1)}\sum_{G_1^\prime \cong G_1} \CZ_{G_1^\prime,\ldots,G_r}(s((g_{1}^\prime,\ldots,g_{r}),i)),
\]
where $N_{\cong}(G_1) := \sharp\{G_1^\prime \mid G_1^\prime \cong G_1\}$.

\begin{ex}\label{Ex:AverageCycleIndex}
	Let $S_{3}$ be the symmetric group on $\{1,2,3\}$. Again let $G_1$ and $G_2$ be two subgroup of $S_3$ such that $G_1 = \langle (1,2) \rangle$ and $G_2 = \langle (1,3,2) \rangle$. Then the subgroups of $S_{3}$ that are isomorphic as permutation group to $G_1$ are $\langle(1,2)\rangle,\langle(1,3)\rangle,\langle(2,3)\rangle$. That is $N_{\cong}(G_1) = 3$. Therefore
	\begin{align*}
		\CZ&_{G_1,G_2}^{av}(s((g_1^{\prime},g_2),i)) \\
		&= 
		\dfrac{1}{3} 
		(\CZ_{\langle(1,2)\rangle,G_2}(s((g_1^{\prime},g_2),i)) + 
		\CZ_{\langle(1,3)\rangle,G_2}(s((g_1^{\prime},g_2),i))\\ 
		&\quad\quad+\CZ_{\langle(2,3)\rangle,G_2}(s((g_1^{\prime},g_2),i)))\\
		&= 
		\dfrac{1}{3}
		(s(((1),(1)),1)^3 + s(((1),(1,2,3)),3)^1 + s(((1),(1,3,2)),3)^1 \\
		&\quad\quad+ 
		s(((1,2),(1)),1)^1 s(((1,2),(1)),2)^1 \\
		&\quad\quad+ 
		s(((1,2),(1,2,3)),1)^1 s(((1,2),(1,2,3)),2)^1 \\
		&\quad\quad+ 
		s(((1,2),(1,3,2)),1)^1 s(((1,2),(1,3,2)),2)^1\\
		&\quad\quad+
		s(((1),(1)),1)^3 + s(((1),(1,2,3)),3)^1 + s(((1),(1,3,2)),3)^1 \\
		&\quad\quad+ 
		s(((1,3),(1)),1)^1 s(((1,3),(1)),2)^1 \\
		&\quad\quad+ 
		s(((1,3),(1,2,3)),1)^1 s(((1,3),(1,2,3)),2)^1 \\
		&\quad\quad+ 
		s(((1,2),(1,3,2)),1)^1 s(((1,2),(1,3,2)),2)^1\\
		&\quad\quad+
		s(((1),(1)),1)^3 + s(((1),(1,2,3)),3)^1 + s(((1),(1,3,2)),3)^1 \\
		&\quad\quad+ 
		s(((2,3),(1)),1)^1 s(((2,3),(1)),2)^1 \\
		&\quad\quad+ 
		s(((2,3),(1,2,3)),1)^1 s(((2,3),(1,2,3)),2)^1 \\
		&\quad\quad+ 
		s(((2,3),(1,3,2)),1)^1 s(((2,3),(1,3,2)),2)^1)
	\end{align*}
\end{ex}

\begin{df}\label{Def:AverageCJWE}
We write $S_n$ for the symmetric group acting on the set $\{1,2,\dots,n\}$. 
Let $C$ be any linear code of length~$n$ over~$\FF_q$, 
and~$\bm{u} = (u_1,\ldots,u_n) \in C$. 
Then ${\sigma}(\bm{u}) := (u_{\sigma(1)},\dots, u_{\sigma(n)})$ 
for a permutation~$\sigma \in S_n$. 
Now the 
code~$C^\prime := \sigma(C):= \{{\sigma}(\bm{u}) \mid \bm{u}\in C\}$ for~$\sigma \in S_{n}$ 
is called \emph{permutationally equivalent} to $C$, 
and denoted by $C {\sim} C^\prime$.
Then the \emph{average $r$-fold complete joint weight enumerator} of codes $C_1,\ldots,C_r$ over $\FF_q$ are defined in~\cite{CM} as:
\[
	\J_{C_{1},\ldots, C_{r}}^{av}
	(x_{\bm a} : \bm a \in \FF_q^{r}) 
	:= 
	\dfrac{1}{N_{\sim}(C_{1})}
	\sum_{C_{1}^\prime \sim C_{1}} 
	\J_{C_{1}^\prime,C_{2},\ldots,C_{r}}
	(x_{\bm a} : \bm a \in \FF_q^{r}).
\]
where 
$
N_{\sim}(C_{1}) 
:= 
\sharp\{C_{1}^{\prime} \mid C_{1}^{\prime} \sim C_{1}\}.
$


We call the $G_1$-average $r$-fold complete joint cycle index
\begin{multline*}
	\CZ_{C_1,\ldots,C_r}^{av}(s((g_1^\prime,g_2,\ldots,g_r),i))
	:= \CZ^{av}(\G_{C_1^\prime,C_2,\ldots,C_r};\\
	s((g_1^\prime,g_2,\ldots,g_r),i): C_{1}^\prime \sim C_{1},
	(g_{1}^\prime,g_2,\ldots,g_r) \in \G_{C_{1}^\prime,C_2,\ldots,C_{r}}, i\in\NN)
\end{multline*}
the $G_1$-average $r$-fold complete joint cycle index for codes $C_1,\ldots,C_r$.
\end{df}

The following theorem gives a connection between the $G_1$-average of $r$-fold complete joint cycle index and the average of $r$-fold complete joint weight enumerator.

\begin{thm}\label{Th:AverageJoint}
	Let $C_1,\ldots,C_r$ be the linear codes of length $n$ over $\FF_q$, where~$q$ is a power of the prime number~$p$.
	Let $\J_{C_1,\ldots,C_r}^{av}(x_{\bm a}:{\bm a}\in \FF_q^r)$ be the average $r$-fold complete joint weight enumerator and 
	\begin{multline*}
		\CZ^{av}(\G_{C_1^\prime,C_2,\ldots,C_r};s((g_1^\prime, g_2,\ldots,g_r), i): 
		C_{1}^\prime \sim C_{1}, 
		(g_1^\prime,g_2,\ldots,g_r) 
		\in \\ \G_{C_1^\prime,C_2,\ldots, C_r}, i\in\NN)
	\end{multline*}
	be the $G_1$-average complete joint cycle index for $C_1,\ldots,C_r$. 
	
	Let $T$ be a map defined as follows: 
	for $\sigma \in S_{n}$, and $g_1 = (u_{11},\ldots,u_{1n})\in C_1, g_2 = (u_{21},\ldots,u_{2n}) \in C_2, \ldots, g_r = (u_{r1},\ldots,u_{rn}) \in C_r$, and for $i\in \{1,\ldots,n\}$, 
	if 	
	$u_{1\sigma(i)}+u_{2i}+\cdots+u_{ri} = 0$, then 
	\[
		s((g_{1}^\prime,g_2,\ldots,g_r),1)\mapsto x_{u_{1\sigma(i)} u_{2i}\ldots u_{ri}}^{1/q};
	\]
	if 
	$ u_{1\sigma(i)}+u_{2i}+\cdots+u_{ri}\neq 0$, then 
	\[
		s((g_{1}^\prime,g_2,\ldots,g_r),p)\mapsto x_{u_{1\sigma(i)} u_{2i}\ldots u_{ri}}^{p/q}. 
	\]
	Then we have 
	\begin{multline*}
		\J_{C_1,\ldots,C_r}^{av}(x_{\bm a}: {\bm a}\in \FF_q^r)
		 =
		T(\CZ^{av}(\G_{C_1^\prime,C_2,\ldots,C_r};s((g_1^\prime,g_2,\ldots,g_r),i):\\ 
		C_{1}^\prime \sim C_{1}, (g_1^\prime,g_2,\ldots,g_r) \in \G_{C_1^\prime,C_2,\ldots,C_r}, i\in\NN)). 
	\end{multline*}
\end{thm}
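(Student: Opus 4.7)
The plan is to reduce Theorem~\ref{Th:AverageJoint} to the $\ell = 1$ case of the Main Theorem, applied separately to each permutationally equivalent code, and then to average over $C_1^\prime \sim C_1$. First I would fix $\sigma \in S_n$ and set $C_1^\prime := \sigma(C_1)$. Every codeword of $C_1^\prime$ has the form $g_1^\prime = \sigma(g_1) = (u_{1\sigma(1)}, \ldots, u_{1\sigma(n)})$ for a unique $g_1 = (u_{11}, \ldots, u_{1n}) \in C_1$, so the $i$-th coordinate of $g_1^\prime$ is exactly $u_{1\sigma(i)}$. Applying Theorem~\ref{Thm:Main} with $\ell = 1$ to the tuple $(C_1^\prime, C_2, \ldots, C_r)$ therefore yields
\[
\J_{C_1^\prime, C_2, \ldots, C_r}(x_{\bm a}) = T_\sigma\bigl(\CZ_{C_1^\prime, C_2, \ldots, C_r}(s((g_1^\prime, g_2, \ldots, g_r), i))\bigr),
\]
where $T_\sigma$ is precisely the substitution rule stated in Theorem~\ref{Th:AverageJoint} specialized to this $\sigma$: the condition $u_{1\sigma(i)} + u_{2i} + \cdots + u_{ri} = 0$ is the condition $\sum_{k=1}^r \bm c_{ki} = \bm 0$ of the Main Theorem under the identification $\bm c_{1i} = u_{1\sigma(i)}$, and the exponents $1/q$ and $p/q$ match the $\ell = 1$ specialization of $1/q^{\ell}$ and $p/q^{\ell}$.

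Next I would sum this identity over all $C_1^\prime \sim C_1$ (equivalently, run $\sigma$ through $S_n$ with the appropriate multiplicity $|\Aut(C_1)|$) and divide by the normalizing count $N$ from Definition~\ref{Def:AverageCJWE}. By definition, the averaged left-hand side equals $\J^{av}_{C_1, \ldots, C_r}(x_{\bm a})$. For the right-hand side, the indeterminates $s((g_1^\prime, g_2, \ldots, g_r), i)$ belonging to different codes $C_1^\prime$ are formally distinct symbols, and the global map $T$ is obtained by gluing the individual $T_\sigma$'s on each block of symbols. Since $T$ is defined termwise on monomials, it commutes with the averaging and gives
\[
\J^{av}_{C_1, \ldots, C_r}(x_{\bm a}) = T\!\left(\frac{1}{N}\sum_{C_1^\prime \sim C_1} \CZ_{C_1^\prime, C_2, \ldots, C_r}(s((g_1^\prime, g_2, \ldots, g_r), i))\right),
\]
and the argument of $T$ is precisely the $G_1$-average $r$-fold complete joint cycle index by Definition~\ref{Def:AverageCJWE}.

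The main subtlety is purely bookkeeping: since the formal variables $s((g_1^\prime, g_2, \ldots, g_r), i)$ depend on the choice of $C_1^\prime$, they must be regarded as distinct symbols indexed implicitly by $C_1^\prime$ (equivalently, by $\sigma$), so that the global $T$ is unambiguous and so that the summation inside the argument of $T$ on the right-hand side truly is the average cycle index. Once this set-theoretic bookkeeping is in place, no combinatorial work beyond the cycle-counting already carried out for Theorem~\ref{Thm:Main} is required, and the conclusion follows.
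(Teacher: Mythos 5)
Your proof is correct and follows essentially the same route as the paper: the paper simply inlines the cycle count of Theorem~\ref{Thm:Main} for the permuted codeword $g_1^\prime=(u_{1\sigma(1)},\ldots,u_{1\sigma(n)})$ --- each tuple contributes $s(\cdot,1)^{q(n-w)}s(\cdot,p)^{(q/p)w}$ to the cycle index and $\prod_{i}x_{u_{1\sigma(i)}u_{2i}\cdots u_{ri}}$ to the weight enumerator, where $w$ is the number of $i$ with $u_{1\sigma(i)}+u_{2i}+\cdots+u_{ri}\neq 0$ --- rather than citing the Main Theorem for each $C_1^\prime\sim C_1$ and then averaging. Your additional remarks (that the substitution depends only on the actual coordinates of $g_1^\prime$, and that $T$ acts termwise and hence commutes with the averaging) are exactly the steps the paper leaves implicit.
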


\begin{proof}
	Let $g_k = (u_{k1},\ldots,u_{kn})\in C_k$ for $k \in \{1,\ldots,r\}$. Then for $\sigma \in S_{n}$, $g_{k}^\prime = (u_{k\sigma(1)},\ldots,u_{k\sigma(n)})$. Again let
	\[
		\wt(g_{1}^\prime,g_{2},\ldots,g_{r})=\sharp\{i\mid u_{1\sigma(i)}+u_{2i}+\cdots+ u_{ri}\neq 0\}. 
	\]
	If $u_{1\sigma(i)}+u_{2i}+\cdots+u_{ri} = 0$, then the $q$ points of the form 
	$(i,{x})\in \{1,\ldots,n\}\times \FF_q$ 
	are all fixed by these elements; 
	if $u_{1\sigma(i)}+u_{2i}+\cdots+u_{ri}\neq 0$, they are permuted in $q/p$ cycles of length $p$, 
	Thus, $g_{k} = (u_{k1},\ldots,u_{kn})\in C_k$ for $k \in \{1,\ldots,r\}$ with $\sigma \in S_{n}$ contribute 
	\[
		s((g_{1}^\prime,g_2,\ldots,g_r),1)^{q(n-\wt(g_{1}^\prime,g_2,\ldots,g_r))}s((g_{1}^\prime,g_2,\ldots,g_r),p)^{(q/p)\wt(g_{1}^\prime,g_2,\ldots,g_r)}
	\]
	to the sum in the formula for the $G_1$-average complete joint cycle index of $C_1,\ldots,C_r$, 
	and 
	\[
		\prod_{i=1}^{n} x_{u_{1\sigma(i)}u_{2i}\cdots u_{ri}}
	\]
	to the sum in the formula for the average $r$-fold complete joint weight enumerator. 
	The result follows.
\end{proof}

\begin{df}\label{Def:rsAverageCJWE}
For 
$S_{n}^{\ell} := \underbrace{S_{n} \times \cdots \times S_{n}}_{\ell}$, 
we define the semidirect product of 
$S_{\ell}$ and $S_{n}^{\ell}$ as 
\[
	S_{\ell} \rtimes S_{n}^{\ell} := \{\iota := (\pi;\sigma_{1},\ldots,\sigma_{\ell}) \mid \pi \in S_{\ell} \text{ and } \sigma_{1},\ldots,\sigma_{\ell} \in S_{n}\}.
\]
We recall the $\ell$-fold joint code, $\Pi^{\ell}$ and for 
$\tilde{{\bm {c}}} = (\bm{c}_1,\ldots,\bm{c}_n) \in \Pi^{\ell}$,
the group $S_{\ell} \rtimes S_{n}^{\ell}$ acts on $\Pi^{\ell}$ as
\[
	\iota(\tilde{\bm {c}}):= 
	(\iota(\bm c_{1}),\ldots,\iota(\bm c_{n})):=
	\begin{pmatrix}
		a_{\pi(1)\sigma_{1}(1)} & \ldots & a_{\pi(1)\sigma_{1}(n)}\\
		a_{\pi(2)\sigma_{2}(1)} & \ldots & a_{\pi(2)\sigma_{2}(n)}\\
		\vdots&\cdots&\vdots\\
		a_{\pi(\ell)\sigma_{\ell}(1)} & \ldots & a_{\pi(\ell)\sigma_{\ell}(n)}
	\end{pmatrix},
\]
where 
$\iota(\bm c_{i}):={}^t(a_{\pi(1)\sigma_{1}(i)},\ldots,a_{\pi(\ell)\sigma_{\ell}(i)})\in \FF_q^{\ell}$. 
Then we 
call~${\Pi^{\ell}}^\prime := \iota(\Pi^{\ell}) := \{\iota(\tilde{\bm c}) \mid \tilde{\bm c} \in \Pi^{\ell}\}$ 
an \emph{equivalent $\ell$-fold joint code} to $\Pi^{\ell}$, 
and denoted by ${\Pi^{\ell}}^\prime \sim \Pi^{\ell}$. 
Now the \emph{average $(\ell,r)$-fold complete joint weight enumerator} of $\Pi_{1}^{\ell},\ldots,\Pi_{r}^{\ell}$ is defined by
\[
	\J_{\Pi_{1}^{\ell},\Pi_{2}^{\ell},\ldots,\Pi_{r}^{\ell}}^{av} 
	(x_{\bm a} : \bm a \in \FF_q^{\ell \times r})
	:= 
	\dfrac{1}{N_{\sim}({\Pi_{1}^{\ell}})}
	\sum_{{\Pi_{1}^{\ell}}^\prime \sim \Pi_{1}^{\ell}}
	\J_{{\Pi_{1}^{\ell}}^\prime,\Pi_{2}^{\ell},\ldots,\Pi_{r}^{\ell}}(x_{\bm{a}}).
\]
where
$
N_{\sim}({\Pi_{1}^{\ell}}) 
:= 
\sharp\{{\Pi_{1}^{\ell}}^{\prime} 
\mid 
{\Pi_{1}^{\ell}}^{\prime} \sim \Pi_{1}^{\ell}\}.
$
Now by Theorem~\ref{Th:GenMacWilliams}, we have the generalized MacWilliams identity for the average $(\ell,r)$-fold complete joint weight enumerator as follows:
\begin{multline*}
	\J_{\widehat{\Pi}_{1}^{\ell},\ldots,\widehat{\Pi}_{r}^{\ell}}^{av}(x_{\bm{a}} : \bm{a} \in \FF_q^{\ell \times r}) = \dfrac{1}{|\Pi_{1}^{\ell}|^{\delta(\Pi_{1}^{\ell},\widehat{\Pi}_{1}^{\ell})}\cdots|\Pi_{r}^{\ell}|^{\delta(\Pi_{r}^{\ell},\widehat{\Pi}_{r}^{\ell})}}\\ 
	T^{\delta(\Pi_{1}^{\ell},\widehat{\Pi}_{1}^{\ell})} \otimes \cdots \otimes T^{\delta(\Pi_{r}^{\ell},\widehat{\Pi}_{r}^{\ell})} \J_{\Pi_{1}^{\ell},\ldots,\Pi_{r}^{\ell}}^{av}(x_{\bm{a}}),
\end{multline*}
{where $T^{0}$ represents the identity matrix~$I$}.

We call the $G_1$-average $r$-fold complete joint cycle index
\begin{multline*}
	\CZ_{\Pi_{1}^{\ell},\ldots,\Pi_{r}^{\ell}}^{av}(s((g_1^\prime,g_2,\ldots,g_r),i))
	:= 
	\CZ^{av}(\G_{{\Pi_{1}^{\ell}}^\prime,\Pi_{2}^{\ell},\ldots,\Pi_{r}^{\ell}};\\
	s((g_1^\prime,g_2,\ldots,g_r),i):
	{\Pi_{1}^{\ell}}^\prime \sim \Pi_{1}^{\ell},
	(g_{1}^\prime,g_2,\ldots,g_r) \in \G_{{\Pi_{1}^{\ell}}^\prime,\Pi_{2}^{\ell},\ldots,\Pi_{r}^{\ell}}, i\in\NN)
\end{multline*}
the $G_1$-average $r$-fold complete joint cycle index for $\ell$-fold joint codes $\Pi_{1}^{\ell},\ldots,\Pi_{r}^{\ell}$.
\end{df}

In the following theorem, we give a relationship between the average $(\ell,r)$-fold complete joint weigh enumerator and the $G_1$-average $r$-fold complete joint cycle index for $\ell$-fold joint codes as a generalization of Theorem~\ref{Th:AverageJoint}.

\begin{thm}\label{Thm:rsAverage}
	For $k \in \{1,\ldots,r\}$ and $j \in \{1, \ldots, \ell\}$, let $C_{kj}$ be an $\FF_q$-linear code of length $n$, where $q$ is a power of the number $p$. Again let $\Pi_{k}^{\ell}$ be an $\ell$-fold joint code of $C_{k1},\dots,C_{k\ell}$.
	Let $\J_{\Pi_{1}^{\ell},\ldots,\Pi_{r}^{\ell}}^{av}(x_{\bm a}:{\bm a}\in \FF_q^{\ell\times r})$ be the average $(\ell,r)$-fold complete joint weight enumerator of $\Pi_{1}^{\ell},\ldots,\Pi_{r}^{\ell}$, and 
	\begin{multline*}
		\CZ^{av}(\G_{\iota\Pi_{1}^{\ell},\ldots,\Pi_{r}^{\ell}};
		s((g_1^{\prime},\ldots,g_r),i):
		{\Pi_{1}^{\ell}}^\prime \sim \Pi_{1}^{\ell},
		(g_1^{\prime},\ldots,g_r)\in \G_{{\Pi_{1}^{\ell}}^\prime,\ldots,\Pi_{r}^{\ell}}, \\i\in\NN)
	\end{multline*} 
	be the $G_1$-average $r$-fold complete joint cycle index for $\Pi_{1}^{\ell},\ldots,\Pi_{r}^{\ell}$. 
	
	Let $T$ be a map defined as follows: 
	for $\iota = (\pi;\sigma_{1},\ldots,\sigma_{\ell}) \in S_{\ell}\rtimes S_{n}^{\ell}$, and $g_1 = (\bm {c}_{11},\ldots,\bm{c}_{1n})\in \Pi_{1}^{\ell},\ldots,g_r = (\bm {c}_{r1},\ldots,\bm{c}_{rn})\in \Pi_{r}^{\ell}$, and for $i\in \{1,\ldots,n\}$,
	if 	
	$\iota(\bm c_{1i}) + \bm c_{2i} + \cdots + \bm c_{ri} = {\bm 0}$, then 
	\[
		s((g_1^\prime,\ldots,g_r),1)
		\mapsto 
		x_{{\iota(\bm c_{1i})\bm c_{2i} \ldots \bm c_{ri}}}^{1/q^{\ell}};
	\]
	if 
	$\iota(\bm c_{1i}) + \bm c_{2i} + \cdots + \bm c_{ri} \neq {\bm 0}$, then 
	\[
		s((g_1^\prime,\ldots,g_r),p)\mapsto x_{{\iota(\bm c_{1i})\bm c_{2i}\ldots \bm c_{ri}}}^{p/q^{\ell}}. 
	\]
	Then we have 
	\begin{multline*}
		\J_{\Pi_{1}^{\ell},\ldots,\Pi_{r}^{\ell}}^{av}
		(x_{\bm a}: {\bm a}\in \FF_q^{\ell\times r})=
		T(\CZ^{av}(\G_{{\Pi_{1}^{\ell}}^\prime,\ldots,\Pi_{r}^{\ell}};
		s((g_1^{\prime},\ldots,g_r),i):\\
		{\Pi_{1}^{\ell}}^\prime \sim \Pi_{1}^{\ell},
		(g_1^{\prime},\ldots,g_r)\in \G_{{\Pi_{1}^{\ell}}^\prime,\ldots,\Pi_{r}^{\ell}}, i\in\NN)). 
	\end{multline*}
\end{thm}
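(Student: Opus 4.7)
The plan is to reduce Theorem~\ref{Thm:rsAverage} to the Main Theorem (Theorem~\ref{Thm:Main}) by applying the latter separately to each equivalent code appearing in the average, then averaging. The observation underlying the reduction is that for each fixed $\iota = (\pi;\sigma_1,\ldots,\sigma_\ell) \in S_\ell \rtimes S_n^\ell$, the image ${\Pi_1^\ell}^\prime := \iota(\Pi_1^\ell)$ is itself an $\ell$-fold joint code of length $n$ over $\FF_q$, so the tuple $({\Pi_1^\ell}^\prime, \Pi_2^\ell, \ldots, \Pi_r^\ell)$ falls squarely into the hypotheses of the Main Theorem.

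First I would unpack both sides by definition:
\[
    \J^{av}_{\Pi_1^\ell,\ldots,\Pi_r^\ell}(x_{\bm a}) = \frac{1}{|\{{\Pi_1^\ell}^\prime : {\Pi_1^\ell}^\prime \sim \Pi_1^\ell\}|} \sum_{{\Pi_1^\ell}^\prime \sim \Pi_1^\ell} \J_{{\Pi_1^\ell}^\prime, \Pi_2^\ell, \ldots, \Pi_r^\ell}(x_{\bm a}),
\]
and similarly for the $G_1$-average $r$-fold complete joint cycle index. For each ${\Pi_1^\ell}^\prime \sim \Pi_1^\ell$, Theorem~\ref{Thm:Main} applied to $({\Pi_1^\ell}^\prime, \Pi_2^\ell, \ldots, \Pi_r^\ell)$ yields
\[
    \J_{{\Pi_1^\ell}^\prime, \Pi_2^\ell, \ldots, \Pi_r^\ell}(x_{\bm a}) = T(\CZ_{{\Pi_1^\ell}^\prime, \Pi_2^\ell, \ldots, \Pi_r^\ell}(s((g_1^\prime,g_2,\ldots,g_r),i))),
\]
where the substitution $T$ is governed by the $i$-th columns of the tuple $(g_1^\prime,g_2,\ldots,g_r)$.

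Next I would match the substitution rule. For $g_1 = (\bm c_{11},\ldots,\bm c_{1n}) \in \Pi_1^\ell$, the corresponding element of ${\Pi_1^\ell}^\prime$ is $g_1^\prime = \iota(g_1) = (\iota(\bm c_{11}),\ldots,\iota(\bm c_{1n}))$, whose $i$-th column is $\iota(\bm c_{1i})$. Hence the Main Theorem's condition $\sum_{k=1}^r (\text{$i$-th column of } g_k) = \bm 0$ becomes exactly $\iota(\bm c_{1i}) + \bm c_{2i} + \cdots + \bm c_{ri} = \bm 0$, and the substitution on $s((g_1^\prime,g_2,\ldots,g_r),1)$ becomes $x_{\iota(\bm c_{1i}) \bm c_{2i} \ldots \bm c_{ri}}^{1/q^\ell}$; the $\neq \bm 0$ case is entirely analogous. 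Thus the $T$ of Theorem~\ref{Thm:rsAverage} is precisely the uniform assembly of the $T$-substitutions that the Main Theorem produces on each of the summands indexed by ${\Pi_1^\ell}^\prime$.

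Averaging the per-equivalence identity over ${\Pi_1^\ell}^\prime \sim \Pi_1^\ell$ and using that $T$ acts by substituting indeterminates (so commutes with taking linear combinations) then gives the desired equality. The main obstacle, which is essentially bookkeeping, is verifying that as ${\Pi_1^\ell}^\prime$ ranges over the equivalence class of $\Pi_1^\ell$, the families $\{s((g_1^\prime,g_2,\ldots,g_r),i) : g_1^\prime \in {\Pi_1^\ell}^\prime\}$ assemble into a single coherent family of indeterminates on which the single substitution $T$ described in the theorem acts correctly; once this is clear, the identity follows at once from term-by-term application of Theorem~\ref{Thm:Main}.
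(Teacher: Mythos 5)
Your proposal is correct and is essentially the paper's own argument in modular form: the paper's proof simply re-runs the cycle-counting verification of Theorem~\ref{Thm:Main} with $g_1$ replaced by $g_1'=\iota(g_1)$, which is exactly what your invocation of the Main Theorem applied to the tuple $(\iota(\Pi_1^{\ell}),\Pi_2^{\ell},\ldots,\Pi_r^{\ell})$ unfolds to. The two points you isolate --- that $\iota(\Pi_1^{\ell})$ is again an $\ell$-fold joint code so the Main Theorem applies, and that the substitution $T$ commutes with averaging --- are precisely what makes the term-by-term reduction work, so nothing is missing.
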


\begin{proof}
	Let $g_k=({\bm c_{k1}},\ldots,{\bm c_{kn}})\in \Pi_{k}^{\ell}$ for $k \in \{1,\ldots,r\}$. Then for 
	$\iota \in S_{\ell} \rtimes S_{n}^{\ell}$, 
	$g_k^\prime = (\iota({\bm c_{k1}}),\ldots,\iota(\bm c_{kn}))$. 
	Again let 
	\[
		\wt^{(\ell,r)}(g_1^\prime,\ldots,g_r)=
		\sharp\{i\mid \iota(\bm c_{1i}) + \bm c_{2i} + \cdots + \bm c_{ri}\neq {\bf 0}\}. 
	\]
	If $\iota(\bm c_{1i}) + \bm c_{2i} + \cdots + \bm c_{ri} = \bm 0$, then the $q^{\ell}$ points 
	of the form $(i,{\bm x})\in \{1,\ldots,n\}\times \FF_q^{\ell}$ 
	are all fixed by this element; 
	if $\iota(\bm c_{1i}) + \bm c_{2i} + \cdots + \bm c_{ri} \neq {\bm 0}$, they are
	permuted in $q^{\ell}/p$ cycles of length $p$. 
	Thus, $g_k=({\bm c_{k1}},\ldots,{\bm c_{kn}})\in \Pi_{k}^{\ell}$ for $k \in \{1,\ldots,r\}$ with $\iota\in S_{\ell} \rtimes S_{n}^{\ell}$, contribute 
	\[
		s((g_1^\prime,\ldots,g_r),1)^{q^{\ell}(n-\wt^{(\ell,r)}(g_1^\prime,\ldots,g_r))}s((g_1^\prime,\ldots,g_r),p)^{(q^{\ell}/p)\wt^{(\ell,r)}(g_1^\prime,\ldots,g_r)}
	\]
	to the sum in the formula for the $G_1$-average complete joint cycle index of $\Pi_{1}^{\ell},\ldots,\Pi_{r}^{\ell}$, 
	and 
	\[
		\prod_{i=1}^{n} x_{\iota(\bm c_{1i})\bm c_{2i}\cdots{\bm c_{ri}}}
	\]
	to the sum in the formula for the average $(\ell,r)$-fold complete joint weight enumerator. 
	The result follows. 
\end{proof}

\subsection{Average Intersection Number}

Let $C$ and $D$ be two linear codes of length~$n$ over $\FF_q$. The notion of the average intersection number of $C$ and $D$ was defined in~\cite{Y1991} as follows:
\[
	\Delta(C,D) 
	:= 
	\dfrac{1}{N_{\sim}(C)} 
	\sum_{C^\prime \sim C} 
	|C^\prime \cap D|.
\] 
where 
$
N_{\sim}(C) 
:= 
\sharp\{C^{\prime} \mid C^{\prime} \sim C\}.
$
We now give the concept of the average intersection number of two permutation groups.

\begin{df}\label{Def:GrAvIN}
	Let $G$ and $H$ be two permutation groups on a set $\Omega$, where $|\Omega| = n$. The \emph{average intersection number} of~$G$ and $H$ is defined as
	\[
		\mathfrak{I}^{av}(G,H) := 
		\dfrac{1}{N_{\cong}(G)}\sum_{G^\prime \cong G}
		|G^\prime \cap H|.
	\]
\end{df}

\begin{ex}\label{Ex:IntersectionNumber}
	From Example~\ref{Ex:AverageCycleIndex}, we have $\mathfrak{I}^{av}(G_1,G_2) = 1$.
\end{ex}

Now we recall Definition~\ref{Def:CompleteJointCycleIndex}, and construct from $C$ and $D$ two permutation groups $G(C)$ and $H(D)$. 
Then for 
$C^\prime \sim C$,
we have $G^\prime(C^\prime) \cong G(C)$. We call the average intersection number~$\mathfrak{I}^{av}(G(C),H(D))$ the average intersection number for codes $C$ and $D$.

We enclose the section with the following result. Since the proof of the following theorem is straightforward, we omit the detail.

\begin{thm}\label{Th:IntersectionNumber}
	Let $C$ and $D$ be two codes of length~$n$ over $\FF_q$. Again let $G(C)$ and $H(D)$ be two permutation groups constructed from $C$ and $D$ respectively. Then $\mathfrak{I}^{av}(G(C),H(D)) = \Delta(C,D)$.
\end{thm}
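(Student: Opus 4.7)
The plan is to set up a bijective correspondence between the two averages, so that both the indexing sets and the summands match term-by-term. The whole argument rests on the fact that the construction $c = (u_1,\ldots,u_n) \mapsto [(i,x)\mapsto(i,x+u_i)]$ is an injection from $\FF_q^n$ into $\mathrm{Sym}(\Omega)$, where $\Omega = \{1,\ldots,n\}\times\FF_q$. Consequently $G(C')$ and $C'$ are in canonical bijection as sets, and two such translation-type permutations coincide if and only if their underlying codewords do.

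First I would record the following elementary identity. For any two codes $C',D \subseteq \FF_q^n$, the intersection of permutation groups satisfies
\[
|G(C')\cap H(D)| = |C'\cap D|,
\]
simply because both sides count the codewords $u\in C'\cap D$ under the above injection. This takes care of the summands.

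Next I would establish the indexing bijection: the map $C' \mapsto G(C')$ is a bijection between $\{C' \mid C' \sim C\}$ and $\{G' \mid G' \cong G(C)\}$. For the forward direction, given $C' = \sigma(C)$ with $\sigma \in S_n$, define $\tilde\sigma \in \mathrm{Sym}(\Omega)$ by $\tilde\sigma(i,x) := (\sigma^{-1}(i),x)$; a direct computation shows
\[
G(C') = \tilde\sigma\, G(C)\, \tilde\sigma^{-1},
\]
so $G(C')\cong G(C)$ as permutation groups. Injectivity of $C'\mapsto G(C')$ follows from the codeword-to-permutation injection noted above. Surjectivity — that every permutation group isomorphic to $G(C)$ inside $\mathrm{Sym}(\Omega)$ is realized as $G(C')$ for some $C'\sim C$ — is the only nontrivial point, and is the step I expect to be the main obstacle; it amounts to checking that the natural $S_n$-action on codes captures all permutational equivalences of the corresponding translation groups on $\Omega$. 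This is what the paper tacitly asserts in the preamble to the theorem (``for $C'\sim C$, we have $G'(C')\cong G(C)$''), and it is the reason the authors judge the proof to be straightforward.

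Once the bijection is in place, the theorem follows by substitution into the definitions. We have $N_\cong(G(C)) = \#\{C' \mid C'\sim C\}$ from the bijection, and pairing terms gives
\[
\mathfrak{I}^{av}(G(C),H(D)) = \dfrac{1}{N_\cong(G(C))}\sum_{G'\cong G(C)} |G'\cap H(D)| = \dfrac{1}{\#\{C'\sim C\}}\sum_{C'\sim C} |C'\cap D| = \Delta(C,D),
\]
as required.
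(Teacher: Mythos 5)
Your reduction of the summands is fine: the assignment sending a codeword $u$ to the translation $(i,x)\mapsto(i,x+u_i)$ is injective, so $|G(C')\cap H(D)|=|C'\cap D|$, and your conjugation computation $G(\sigma(C))=\tilde\sigma\,G(C)\,\tilde\sigma^{-1}$ is correct. But the step you yourself flag as ``the main obstacle'' --- surjectivity of $C'\mapsto G(C')$ onto $\{G'\mid G'\cong G(C)\}$ --- is not a routine verification that can be deferred: it is false in general, and the paper does not assert it (the sentence you quote gives only the forward implication $C'\sim C\Rightarrow G(C')\cong G(C)$). Definition~\ref{Def:GrAvIN} averages over \emph{all} permutation groups on $\Omega$ isomorphic as permutation groups to $G(C)$; Example~\ref{Ex:AverageCycleIndex} confirms that this means the full conjugacy class inside $\mathrm{Sym}(\Omega)$ (all three transposition subgroups of $S_3$ are counted there). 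That class in general strictly contains $\{G(C')\mid C'\sim C\}$, because every $G(C')$ preserves each fibre $\{i\}\times\FF_q$ of $\Omega$, whereas a conjugate subgroup need not.

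Concretely, take $q=2$, $n=2$, $C=D=\{00,11\}$. Then $\{C'\mid C'\sim C\}=\{C\}$ and $\Delta(C,D)=|C\cap D|=2$, while $G(C)$ is generated by a fixed-point-free involution of the four-point set $\Omega$; under the labelling $1=(1,0)$, $2=(1,1)$, $3=(2,0)$, $4=(2,1)$ there are three subgroups of $\mathrm{Sym}(\Omega)$ permutation-isomorphic to $G(C)$, generated by $(12)(34)$, $(13)(24)$, $(14)(23)$, and only the first is of the form $G(C')$. The other two meet $H(D)=G(C)$ trivially, so $\mathfrak{I}^{av}(G(C),H(D))=\tfrac{1}{3}(2+1+1)=\tfrac{4}{3}\neq 2$. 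Hence your term-by-term pairing cannot be completed, and the identification $N_\cong(G(C))=\sharp\{C'\mid C'\sim C\}$ on which your final display rests also fails. The paper omits its proof, so there is nothing to compare against; but any correct argument must either restrict the average in Definition~\ref{Def:GrAvIN} to groups arising from codes (which makes the statement essentially tautological) or replace ``isomorphic as permutation groups'' by a notion adapted to the fibred structure of $\{1,\ldots,n\}\times\FF_q$. As written, your proposal has a genuine gap at exactly the point you identified, and along the route you chose it cannot be closed.
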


\section{$\ZZ_{k}$-code Analogue of Main Result}\label{Sec:ZkAnalogue}

In~\cite{BDHO}, the authors introduced the concept of $\ZZ_{k}$-linear codes. In this section, we give a $\ZZ_{k}$-linear code analogue of Theorem~\ref{Thm:Main}.

Let $\ZZ_{k}$ be the ring 
of integers modulo $k$, where $k$ 
is a positive integer. 
In this paper, we always assume that $k\geq 2$ and 
we take the set $\ZZ_{k}$ to be 
$\{0,1,\ldots,k-1\}$.  
A $\ZZ_{k}$-linear code $C$ of length $n$
is a $\ZZ_{k}$-submodule of $\ZZ_{k}^n$.
The \emph{inner product} of two elements 
$\bm{u},\bm{v}\in \ZZ_{k}^n$ is defined as:
\[
	\bm{u} \cdot \bm{v} := u_1v_1 + u_2v_2 + \cdots + u_nv_n \pmod k,
\]
where $\bm{u} = (u_1,u_2,\ldots, u_n)$ and $\bm{v} = (v_1,v_2,\ldots,v_n)$.

We denote, $\Pi^{\ell} := C_{1} \times\cdots\times C_{\ell}$,
where $C_{1},\ldots,C_{\ell}$ are the $\ZZ_{k}$-linear codes of length $n$. We call $\Pi^{\ell}$ as \emph{$\ell$-fold joint code} of $C_{1},\ldots,C_{\ell}$. We denote an element of $\Pi^{\ell}$ by 
\[
	{\tilde{\bm {c}}} := 
	({\bm c_{1}},\ldots,{\bm c_{n}}):=
	\begin{pmatrix}
		a_{11}&\ldots&a_{1n}\\
		a_{21}&\ldots&a_{2n}\\
		\vdots&\cdots&\vdots\\
		a_{\ell 1}&\ldots&a_{\ell n}
	\end{pmatrix},
\]
where 
${\bm c_{i}}:={}^t(a_{1i},\ldots,a_{\ell i})\in \ZZ_{k}^{\ell}$
and 
$\mu_{j}(\tilde{\bm{c}}):= (a_{j1},\ldots, a_{jn}) \in C_{j}$.

Now let $\Pi_{1}^{\ell},\ldots,\Pi_{r}^{\ell}$ 
be the $\ell$-fold joint codes (not necessarily the same) over $\ZZ_{k}$. 
For 
$\nu \in \{1,\ldots,r\}$, 
we denote, $\Pi_{\nu}^{\ell} := C_{\nu 1} \times \cdots \times C_{\nu \ell}$, 
where $C_{\nu 1},\ldots,C_{\nu \ell}$ 
be the linear codes of length~$n$ over $\ZZ_{k}$.
An element of $\Pi_{\nu}^{\ell}$ is denoted by
\[
	\tilde{\bm {c}}_{\nu}:= 
	({\bm c_{\nu 1}},\ldots,{\bm c_{\nu n}}):=
	\begin{pmatrix}
		a_{11}^{(\nu)}&\ldots&a_{1n}^{(\nu)}\\
		a_{21}^{(\nu)}&\ldots&a_{2n}^{(\nu)}\\
		\vdots&\cdots&\vdots\\
		a_{\ell 1}^{(\nu)}&\ldots&a_{\ell n}^{(\nu)}
	\end{pmatrix},
\]
where 
${\bm c_{\nu i}}:={}^t(a_{1i}^{(\nu)},\ldots,a_{\ell i}^{(\nu)})\in \ZZ_{k}^{\ell}$. 
and 
$\mu_{j}(\tilde{\bm{c}}_{\nu}):= (a_{j1}^{(\nu)},\ldots, a_{jn}^{(\nu)}) \in C_{\nu j}$.
Then the \emph{$(\ell,r)$-fold complete joint weight enumerator} of $\Pi_{1}^{\ell},\ldots,\Pi_{r}^{\ell}$ is defined as follows:
\[
	\J_{\Pi_{1}^{\ell},\ldots,\Pi_{r}^{\ell}}(x_{\bm a} : \bm a \in \ZZ_{k}^{\ell \times r}) := 
	\sum_{\tilde{\bm {c}}_1\in \Pi_{1}^{\ell},\ldots,\tilde{\bm {c}}_r \in \Pi_{r}^{\ell}}
	\prod x_{\bm {a}}^{n_{\bm a}(\tilde{\bm {c}}_1,\ldots, \tilde{\bm {c}}_r)},
\]
where ${n_{\bm a}(\tilde{\bm {c}}_1,\ldots, \tilde{\bm {c}}_r)}$ 
denotes the number of $i$ such that 
${\bm a}=(\bm c_{1i},\ldots,\bm c_{ri})$. 

We have the MacWilliams identity for the complete weight enumerator of a $\ZZ_{k}$-linear code $C$ as follows. 

\begin{thm}[\cite{BDHO}]\label{Th:ZkMacWilliams}
	Let $C$ be a linear code of length~$n$ over $\ZZ_{k}$, 
	and $\mathcal{C}_{C}(x_a : a \in \ZZ_{k})$ be the complete weight enumerator of $C$. Again let $\eta_{k}$ be the $k$th primitive root of unity. Then we have
	\[
		\mathcal{C}_{C^\perp}(x_a : a \in \ZZ_{k}) = 
		\dfrac{1}{|C|} T \cdot \mathcal{C}_C(x_a),
	\]
	where $T = \left(\eta_{k}^{ab}\right)_{a,b \in \ZZ_{k}}$.
\end{thm}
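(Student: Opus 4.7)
The plan is to follow the standard character-theoretic proof of the MacWilliams identity, exactly in parallel with the argument already carried out for Theorem~\ref{Th:GenMacWilliams}, but with the nontrivial $\FF_q$-character replaced by $\chi(x) := \eta_{k}^{x}$, viewed as a homomorphism from $(\ZZ_{k},+)$ to $\C^\times$. Since $\eta_{k}$ is a primitive $k$-th root of unity, $\chi$ is a nontrivial character of $\ZZ_{k}$, and its extension to $\ZZ_{k}^{n}$ via $\bm{v} \mapsto \chi(\bm{u}\cdot\bm{v})$ runs over all the characters of $\ZZ_{k}^{n}$ as $\bm{u}$ varies.

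First I would record the indicator identity: by the orthogonality relations for characters of the finite abelian group $\ZZ_{k}^{n}$, for every $\bm{v} \in \ZZ_{k}^{n}$ one has
\[
\delta_{C^\perp}(\bm{v}) \;=\; \dfrac{1}{|C|}\sum_{\bm{u}\in C}\eta_{k}^{\bm{u}\cdot\bm{v}},
\]
since the sum on the right is $|C|$ when $\bm{v} \in C^\perp$ and vanishes otherwise. Next I would rewrite $\mathcal{C}_{C^\perp}$ as a sum over all of $\ZZ_{k}^{n}$ weighted by this indicator, substitute the character-sum expression, and interchange the summations:
\[
\mathcal{C}_{C^\perp}(x_{a}) \;=\; \sum_{\bm{v}\in \ZZ_{k}^{n}}\delta_{C^\perp}(\bm{v})\prod_{i=1}^{n}x_{v_{i}} \;=\; \dfrac{1}{|C|}\sum_{\bm{u}\in C}\sum_{\bm{v}\in \ZZ_{k}^{n}}\eta_{k}^{\bm{u}\cdot\bm{v}}\prod_{i=1}^{n}x_{v_{i}}.
\]

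The final step is the factorization: because $\bm{u}\cdot\bm{v} = \sum_{i} u_{i}v_{i} \pmod{k}$, the inner sum splits coordinatewise as
\[
\sum_{\bm{v}\in \ZZ_{k}^{n}}\eta_{k}^{\bm{u}\cdot\bm{v}}\prod_{i=1}^{n}x_{v_{i}} \;=\; \prod_{i=1}^{n}\sum_{b\in \ZZ_{k}}\eta_{k}^{u_{i}b}\,x_{b},
\]
and each factor $\sum_{b}\eta_{k}^{u_{i}b}x_{b}$ is exactly the $u_{i}$-th entry of $T \cdot (x_{a})_{a\in\ZZ_{k}}$. Reassembling the product and dividing by $|C|$ yields the stated identity, since $T\cdot \mathcal{C}_{C}(x_{a})$ is by definition the polynomial obtained by substituting $x_{a}\mapsto \sum_{b}\eta_{k}^{ab}x_{b}$ in $\mathcal{C}_{C}$.

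I do not anticipate any real obstacle; the argument is a direct specialization of the proof of Theorem~\ref{Th:GenMacWilliams} with $\ell = r = 1$ and the abstract character $\chi$ replaced by the explicit choice $\eta_{k}^{\bullet}$. The only points to check are that $\eta_{k}^{\bullet}$ is well-defined on $\ZZ_{k}$ (immediate from $\eta_{k}^{k} = 1$) and that it is nontrivial (immediate from primitivity), so that the orthogonality relations for characters of $\ZZ_{k}^{n}$ apply and give the indicator identity above.
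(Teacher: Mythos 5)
Your proof is correct. The paper itself gives no proof of this statement (it is imported from \cite{BDHO} as a known result), but your character-theoretic argument --- the indicator identity $\delta_{C^\perp}(\bm v)=\frac{1}{|C|}\sum_{\bm u\in C}\eta_k^{\bm u\cdot\bm v}$, interchange of sums, and coordinatewise factorization --- is the standard proof and is exactly the method the paper uses for its generalization in Theorem~\ref{Th:GenMacWilliams}, so it counts as essentially the same approach; your remark that primitivity of $\eta_k$ is what makes $\eta_k^{\bm u\cdot\bm v}=1$ for all $\bm u\in C$ equivalent to $\bm v\in C^\perp$ is the one point that genuinely needs checking, and you check it.
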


We adopt the following notation for simplicity,
\[
	{\Pi^{\ell}}^\perp := C_1^\perp \times \cdots \times C_{\ell}^\perp 
	\text{ and } 
	|{\Pi^{\ell}}| := |C_1|\times \cdots \times |C_{\ell}|.
\] 

Now we give the $\ZZ_{k}$-linear code version of Theorem~\ref{Th:GenMacWilliams}. The proof is similar to that of Theorem~\ref{Th:GenMacWilliams}. Therefore, we omit to give the detail.

\begin{thm}\label{Th:ZkGenMacWilliams}
	The MacWilliams identity for the $(\ell,r)$-fold complete joint weight enumerator of $\ell$-fold joint codes $\Pi_{1}^{\ell},\ldots,\Pi_{r}^{\ell}$ over $\ZZ_{k}$ is given by
	\begin{multline*}
	\J_{\widehat{\Pi}_{1}^{\ell},\ldots,\widehat{\Pi}_{r}^{\ell}}
	(x_{\bm a} : \bm a \in \ZZ_{k}^{\ell \times r}) 
	= 
	\dfrac{1}{|\Pi_{1}^{\ell}|^{\delta(\Pi_{1}^{\ell},\widehat{\Pi}_{1}^{\ell})} \cdots|\Pi_{r}^{\ell}|^{\delta(\Pi_{r}^{\ell},\widehat{\Pi}_{r}^{\ell})}}\\ 
	\left(T^{\delta(\Pi_{1}^{\ell},\widehat{\Pi}_{1}^{\ell})}\right)^{\otimes\ell} 
	\otimes \cdots \otimes 
	\left(T^{\delta(\Pi_{r}^{\ell},\widehat{\Pi}_{r}^{\ell})}\right)^{\otimes\ell}  
	\J_{\Pi_{1}^{\ell},\ldots,\Pi_{r}^{\ell}}(x_{\bm a}),
	\end{multline*}
	where
	\[
		\delta(\Pi^{\ell},\widehat{\Pi}^{\ell}) := 
		\begin{cases}
			0 & \mbox{if} \quad \widehat{\Pi}^{\ell} = \Pi^{\ell}, \\
			1 & \mbox{if} \quad \widehat{\Pi}^{\ell} = {\Pi^{\ell}}^\perp . 
		\end{cases}
	\]
\end{thm}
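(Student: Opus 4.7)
The plan is to mimic the argument of Theorem~\ref{Th:GenMacWilliams} verbatim, replacing the abstract additive character $\chi \colon \FF_q \to \CC^{\times}$ with the concrete character $a \mapsto \eta_k^{a}$ on $\ZZ_k$. Concretely, I first reduce to the single-index case: it suffices to prove that, for any fixed $\nu \in \{1,\ldots,r\}$,
\[
|\Pi_{\nu}^{\ell}|\,\J_{\Pi_{1}^{\ell},\ldots,{\Pi_{\nu}^{\ell}}^{\perp},\ldots,\Pi_{r}^{\ell}}(x_{\bm a})
= \bigl(I^{\otimes \ell} \otimes \cdots \otimes T^{\otimes \ell} \otimes \cdots \otimes I^{\otimes \ell}\bigr) \J_{\Pi_{1}^{\ell},\ldots,\Pi_{r}^{\ell}}(x_{\bm a}),
\]
with $T^{\otimes \ell}$ in the $\nu$-th slot. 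Iterating over the indices $\nu$ for which $\hat{\Pi}_{\nu}^{\ell} = {\Pi_{\nu}^{\ell}}^{\perp}$ and collecting the $1/|\Pi_{\nu}^{\ell}|$ factors then yields the theorem.

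Next I rewrite the sum over ${\Pi_{\nu}^{\ell}}^{\perp}$ as a sum over all of $\ZZ_{k}^{\ell \times n}$ weighted by the indicator $\delta_{{\Pi_{\nu}^{\ell}}^{\perp}}(\bm v)$. The key orthogonality step uses Pontryagin duality on the finite abelian group $\ZZ_k$: for each row $j$ and each $\bm v \in \ZZ_k^n$,
\[
\delta_{C_{\nu j}^{\perp}}(\mu_j(\bm v)) = \frac{1}{|C_{\nu j}|} \sum_{u_j \in C_{\nu j}} \eta_k^{u_j \cdot \mu_j(\bm v)},
\]
which multiplies across $j = 1, \ldots, \ell$ to give
\[
\delta_{{\Pi_{\nu}^{\ell}}^{\perp}}(\bm v) = \frac{1}{|\Pi_{\nu}^{\ell}|} \sum_{\tilde{\bm c} \in \Pi_{\nu}^{\ell}} \eta_k^{\langle \tilde{\bm c}, \bm v \rangle},
\]
where $\langle \tilde{\bm c}, \bm v \rangle := \sum_{i=1}^{n} \mu_i(\tilde{\bm c}) \cdot \mu_i(\bm v)$. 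This is the only place where the finite-field argument of Theorem~\ref{Th:GenMacWilliams} needs replacing, and here it is supplied by Theorem~\ref{Th:ZkMacWilliams}.

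After substituting, the $|\Pi_{\nu}^{\ell}|$'s cancel. I then interchange the outer $\bm v$-sum with the inner sum over $\tilde{\bm c}_{\nu}$, and factor the resulting expression as a product over the $n$ coordinates. Because $\langle \tilde{\bm c}_{\nu}, \bm v \rangle$ decomposes componentwise as $\sum_{i} \bm c_{\nu i} \cdot \bm v_i$ with $\bm v_i \in \ZZ_k^{\ell}$, each factor becomes
\[
\sum_{\bm v_i \in \ZZ_k^{\ell}} \eta_k^{\bm c_{\nu i} \cdot \bm v_i} \, x_{\bm c_{1i} \ldots \bm c_{(\nu-1)i}\, \bm v_i\, \bm c_{(\nu+1)i} \ldots \bm c_{ri}},
\]
and the coefficient $\eta_k^{\bm c_{\nu i} \cdot \bm v_i}$ is precisely an entry of $T^{\otimes \ell}$ with $T = (\eta_k^{ab})_{a,b \in \ZZ_k}$. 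Regrouping the product over $i$ into a product over monomial types $\bm a \in \ZZ_k^{\ell \times r}$ displays the transformation as $(I^{\otimes \ell} \otimes \cdots \otimes T^{\otimes \ell} \otimes \cdots \otimes I^{\otimes \ell})$ applied to $\J_{\Pi_{1}^{\ell},\ldots,\Pi_{r}^{\ell}}(x_{\bm a})$, which gives the desired identity.

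The only conceptual obstacle is verifying that the character $a \mapsto \eta_k^{a}$ separates elements of $\ZZ_k$ so that the key orthogonality relation holds; this is already packaged in Theorem~\ref{Th:ZkMacWilliams}, so the remainder of the proof is the same bookkeeping as in Theorem~\ref{Th:GenMacWilliams}, and the authors' decision to omit the detail is justified.
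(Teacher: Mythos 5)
Your proposal is correct and follows exactly the route the paper intends: it transplants the proof of Theorem~\ref{Th:GenMacWilliams} to $\ZZ_k$, with the abstract character $\chi$ replaced by $a \mapsto \eta_k^a$ and the orthogonality relation $\delta_{C^\perp}(\bm v) = \frac{1}{|C|}\sum_{u \in C}\eta_k^{u\cdot \bm v}$ (valid for any $\ZZ_k$-submodule, since $\eta_k$ is a primitive $k$th root of unity) supplying the one step that needed checking. This is precisely the ``similar'' argument the authors omit, so nothing further is required.
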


Now we construct from $\Pi^{\ell}$ a permutation group $G(\Pi^{\ell})$. The group we construct is the additive group of $\Pi^{\ell}$. We let it act on the set $\{1,\ldots,n\}\times \ZZ_{k}^{\ell}$ in the following way:
$({\bm c_{1}},\ldots,{\bm c_{n}})$
acts as the permutation
\[
	\left(i,
	\begin{pmatrix}
		x_{1}\\
		x_{2}\\
		\vdots\\
		x_{\ell}
	\end{pmatrix}
	\right)
	\mapsto 
	\left(i,
	\begin{pmatrix}
		x_{1}+a_{1i}\\
		x_{2}+a_{2i}\\
		\vdots\\
		x_{\ell}+a_{\ell i}
	\end{pmatrix}
	\right) 
\]
of the set $\{1,\ldots,n\}\times \ZZ_{k}^{\ell}$.
Now let $G_{1}(\Pi_{1}^{\ell}),\ldots,G_{r}(\Pi_{r}^{\ell})$ be $r$ permutation groups. 
We define the \emph{product} of $r$ permutations $({\bm c_{11}}, \ldots, {\bm c_{1n}}) \in \Pi_{1}^{\ell}, \ldots, ({\bm c_{r1}}, \ldots, {\bm c_{rn}}) \in \Pi_{r}^{\ell}$ as follows:
\[
	\left(i,
	\begin{pmatrix}
		x_{1}\\
		x_{2}\\
		\vdots\\
		x_{\ell}
	\end{pmatrix}
	\right)
	\mapsto 
	\left(i,
	\begin{pmatrix}
		x_{1}+\beta_{1}^{i} \pmod k\\
		x_{2}+\beta_{2}^{i} \pmod k\\
		\vdots\\
		x_{\ell}+\beta_{\ell}^{i} \pmod k
	\end{pmatrix}
	\right) 
\]
of a set $\{1, \dots, n\} \times \ZZ_{k}^{\ell}$, 
where 
$\beta_{j}^{i} := a_{ji}^{(1)} + \cdots + a_{ji}^{(r)} \pmod k$. 

The following theorem is a $\ZZ_k$-linear code analogue of Theorem~\ref{Thm:Main}.

\begin{thm}\label{Th:ZkAnalogue}
	For $\nu \in \{1,\ldots,r\}$ and $j \in \{1, \ldots, \ell\}$, let $C_{\nu j}$ be 
	the~$\ZZ_{k}$-linear code of length $n$. 
	Again $\Pi_{\nu}^{\ell}$ be the $\ell$-fold joint code of $C_{\nu 1},\dots,C_{\nu \ell}$.
	Let $\J_{\Pi_{1}^{\ell},\ldots,\Pi_{r}^{\ell}}(x_{\bm a}:{\bm a}\in \ZZ_{k}^{\ell\times r})$ be the~$(\ell,r)$-fold complete joint weight enumerator of $\Pi_{1}^{\ell},\ldots,\Pi_{r}^{\ell}$, and 
	$\CZ(\G_{\Pi_{1}^{\ell},\ldots,\Pi_{r}^{\ell}};s((g_1,\ldots,g_r),i):(g_1,\ldots,g_r)\in \G_{\Pi_{1}^{\ell},\ldots,\Pi_{r}^{\ell}},i\in\NN)$ be the $r$-fold complete joint cycle index for $\Pi_{1}^{\ell},\ldots,\Pi_{r}^{\ell}$. 
	
	Let $T$ be a map defined as follows: 
	for each $g_1 = (\bm {c}_{11},\ldots,\bm{c}_{1n})\in \Pi_{1}^{\ell},\ldots,g_r = (\bm {c}_{r1},\ldots,\bm{c}_{rn})\in \Pi_{r}^{\ell}$, and for $i\in \{1,\ldots,n\}$,\\ 
	if 	
	$\sum_{\nu=1}^{r}\bm c_{\nu i} \equiv {\bm 0} \pmod k$, then 
	\[
		s((g_1,\ldots,g_r),1)\mapsto x_{{\bm c_{1i}\ldots \bm c_{ri}}}^{1/k^{\ell}};
	\]
	if 
	$\sum_{\nu=1}^{r}\bm c_{\nu i} \not\equiv {\bm 0} \pmod k$, then
	\[
		s((g_1,\ldots,g_r),k/\gcd(\beta_{1}^{i},\ldots,\beta_{\ell}^{i},k))
		\mapsto 
		x_{{\bm c_{1i}}\ldots{\bm c_{ri}}}^{(k/\gcd(\beta_{1}^{i},\ldots,\beta_{\ell}^{i},k))/k^{\ell}}. 
	\]
	Then we have 
	\begin{align*}
		\J_{\Pi_{1}^{\ell},\ldots,\Pi_{r}^{\ell}}(& x_{\bm a}: {\bm a}\in \ZZ_k^{\ell\times r})
		= \\
		T(&\CZ(\G_{\Pi_{1}^{\ell},\ldots,\Pi_{r}^{\ell}};s((g_1,\ldots,g_r),i):(g_1,\ldots,g_r)\in \G_{\Pi_{1}^{\ell},\ldots,\Pi_{r}^{\ell}},i\in\NN)). 
	\end{align*}
\end{thm}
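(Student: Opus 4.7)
The plan is to mirror the proof of Theorem~\ref{Thm:Main}, the only real change being the cycle-length analysis, which in the $\ZZ_k$ setting depends on a gcd rather than on the characteristic $p$. Fix elements $g_\nu=(\bm c_{\nu 1},\ldots,\bm c_{\nu n})\in \Pi_\nu^{\ell}$ for $\nu\in\{1,\ldots,r\}$. For each coordinate index $i\in\{1,\ldots,n\}$, observe that the product permutation associated with $(g_1,\ldots,g_r)$ preserves the fiber $\{i\}\times\ZZ_k^{\ell}$, and acts on this fiber by translation by the vector $\bm\beta^i:=(\beta_1^i,\ldots,\beta_\ell^i)\in\ZZ_k^{\ell}$, where $\beta_j^i\equiv\sum_{\nu=1}^{r}a_{ji}^{(\nu)}\pmod k$.

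Next, I would determine the cycle structure of translation by $\bm\beta^i$ on $\ZZ_k^{\ell}$. The key elementary fact is that the additive order of $\bm\beta^i$ in $\ZZ_k^{\ell}$ equals $m_i:=k/\gcd(\beta_1^i,\ldots,\beta_\ell^i,k)$; hence every orbit has length $m_i$ and there are $k^{\ell}/m_i$ orbits. In the degenerate case $\sum_\nu\bm c_{\nu i}\equiv \bm 0\pmod k$ we have $\bm\beta^i=\bm 0$ and $m_i=1$, so all $k^{\ell}$ points of the fiber are fixed. Thus the contribution of the fiber at $i$ to the $r$-fold complete joint cycle index is the monomial
\[
s((g_1,\ldots,g_r),1)^{k^{\ell}} \quad\text{if } \bm\beta^i=\bm 0,
\]
and
\[
s((g_1,\ldots,g_r),m_i)^{k^{\ell}/m_i} \quad\text{if } \bm\beta^i\neq \bm 0.
\]

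Now I would apply the map $T$ coordinatewise. In the fixed case, $s((g_1,\ldots,g_r),1)\mapsto x_{\bm c_{1i}\ldots\bm c_{ri}}^{1/k^{\ell}}$, and raising to the power $k^{\ell}$ produces exactly $x_{\bm c_{1i}\ldots\bm c_{ri}}$. In the nonfixed case, $s((g_1,\ldots,g_r),m_i)\mapsto x_{\bm c_{1i}\ldots\bm c_{ri}}^{m_i/k^{\ell}}$ with $m_i=k/\gcd(\beta_1^i,\ldots,\beta_\ell^i,k)$, so the exponent $k^{\ell}/m_i$ again yields $x_{\bm c_{1i}\ldots\bm c_{ri}}$. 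Either way, the fiber at $i$ contributes the single variable $x_{\bm c_{1i}\ldots\bm c_{ri}}$ after applying $T$.

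Taking the product over $i\in\{1,\ldots,n\}$, the tuple $(g_1,\ldots,g_r)$ contributes $\prod_{i=1}^{n}x_{\bm c_{1i}\ldots\bm c_{ri}}$, which is exactly its contribution to $\J_{\Pi_1^{\ell},\ldots,\Pi_r^{\ell}}(x_{\bm a}:\bm a\in\ZZ_k^{\ell\times r})$. Summing over $(g_1,\ldots,g_r)\in \G_{\Pi_1^{\ell},\ldots,\Pi_r^{\ell}}$ finishes the proof. The main obstacle, and the only place the argument genuinely differs from the $\FF_q$ proof, is verifying that the cycle length equals $k/\gcd(\beta_1^i,\ldots,\beta_\ell^i,k)$ uniformly across the fiber; this is a short exercise in the structure of finite cyclic groups, but it drives the choice of exponent $m_i/k^{\ell}$ appearing in the definition of $T$, so the compatibility between $T$ and the cycle structure needs to be tracked carefully.
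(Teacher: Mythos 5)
Your proposal is correct and follows essentially the same route as the paper's proof: a fiber-by-fiber analysis showing that the product permutation acts on $\{i\}\times\ZZ_k^{\ell}$ by translation by $\bm\beta^i$, splitting it into $k^{\ell}/m_i$ cycles of length $m_i=k/\gcd(\beta_1^i,\ldots,\beta_\ell^i,k)$, after which the substitution $T$ turns each fiber's contribution into $x_{\bm c_{1i}\ldots\bm c_{ri}}$. The only difference is that you explicitly justify the cycle-length claim via the additive order of $\bm\beta^i$ (using $\operatorname{lcm}_j(k/\gcd(\beta_j^i,k))=k/\gcd(\beta_1^i,\ldots,\beta_\ell^i,k)$), a fact the paper asserts without comment before deferring to the argument of its Main Theorem.
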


\begin{proof}
	Let $g_{\nu}=({\bm c_{\nu 1}},\ldots,{\bm c_{\nu n}})\in \Pi_{\nu}^{\ell}$ for $\nu \in \{1,\ldots,r\}$, and 
	\[
		\wt^{(\ell,r)}(g_1,\ldots,g_r)=\sharp\{i\mid \sum_{\nu=1}^{r}{\bm c_{\nu i}}\not\equiv {\bf 0} \pmod k\}. 
	\]
	If $\sum_{\nu=1}^{r}{\bm c_{\nu i}} \equiv \bm 0 \pmod k$, then the $k^{\ell}$ points 
	of the form $(i,{\bm x})\in \{1,\ldots,n\}\times \ZZ_{k}^{\ell}$ 
	are all fixed by this element; 
	if $\sum_{\nu=1}^{r}{\bm c_{\nu i}}\not\equiv {\bm 0} \pmod k$, they are
	permuted in 
	\[
		k^{\ell}/(k/\gcd(\beta_{1}^{i},\ldots,\beta_{\ell}^{i},k)) \text{ cycles of length } 
		k/\gcd(\beta_{1}^{i},\ldots,\beta_{\ell}^{i},k).
	\] 
	Then the result follows from the argument in the proof of Theorem~\ref{Thm:Main}. 
\end{proof}

\section*{Acknowledgements}

\noindent
The authors would like to thank the anonymous
reviewers for their beneficial comments on an earlier version of the manuscript.
The second named author is supported by JSPS KAKENHI (18K03217). 





\begin{thebibliography}{99}

\bibitem{BDHO} E.~Bannai, S.T.~Dougherty, M.~Harada, and M.~Oura,
{Type~II codes, even unimodular lattices and invariant rings,}
{\sl IEEE\ Trans.\ Inform.\ Theory}
{\bf 45} (1999), no.~4 1194--1205.


\bibitem{CameronPaper}
P.J.~Cameron, 
Cycle index, weight enumerator, and Tutte polynomial,
{\sl Electron. J. Combin.} {\bf 9} (2002), no.~1, Note~2. 

\bibitem{Cameron}
P.J.~Cameron, 
{Polynomial aspects of codes,
matroids and permutation groups}, 
http://www.maths.qmul.ac.uk/${\sim}$pjc/csgnotes/cmpgpoly.pdf.

\bibitem{CM}
H.S.~Chakraborty, and T.~Miezaki,
Average of complete joint weight enumerators and self-dual codes,
{\sl Des. Codes Cryptogr.}
{\bf 89}(6) (2021), 1241--1254.




\bibitem{MMC1972}
F.J.~MacWilliams, C.L.~Mallows, and J.H.~Conway, 
{Generalizations of Gleason's theorem on weight enumerators of self-dual codes}, {\sl IEEE Trans. Information Theory} {\bf IT-38} (1972), 794-805.

\bibitem{MO}
T.~Miezaki, and M.~Oura, 
On the cycle index and the weight enumerator,
{\sl Des. Codes Cryptogr.} {\bf 87} (2019), no.~6, 1237--1242. 




\bibitem{SC2000}
I.~Siap, and D.K.~Ray-Chaudhuri, 
{On $r$-fold Complete Weight Enumerator of $r$ Linear Codes}, {in: \sl Contemp. Math. American Math. Society}, {\bf 259}, (2000), 501–513.




\bibitem{Y1989}
T.~Yoshida, 
{The average of joint weight enumerators}, {\sl Hokkaido Math. J.} {\bf 18} (1989), 217-222.

\bibitem{Y1991}
T.~Yoshida, 
{The average intersection number of a pair of self-dual codes}, {\sl Hokkaido Math. J.} {\bf 20} (1991), 539-548.

\end{thebibliography}
\end{document}